\def\<{\langle}
\def\>{\rangle}
\def\a{\alpha}
\newtheorem{thm}{Theorem}[section]
\newtheorem{lem}[thm]{Lemma}
\newtheorem{prop}[thm]{Proposition}
\newtheorem{ex}[thm]{Example}
\theoremstyle{definition}
\newtheorem{defn}{Definition}[section]
\theoremstyle{remark}
\newtheorem{re}{Remark}[section]
\begin{document}
	\title{\bf On the Hom-Lie CoDer pairs}
	\author{\bf Basdouri Imed, Sadraoui Mohamed Amin, Assif Sania}
	\author{{ Asif Sania $^{1}$
			\footnote { Corresponding author,  E-mail:  11835037@zju.edu.cn}
			Basdouri Imed $^{2}$
			\footnote { Corresponding author,  E-mail:  basdourimed@yahoo.fr}
			,\  Sadraoui Mohamed Amin $^{3}$
			\footnote { Corresponding author,  E-mail: aminsadrawi@gmail.com}
			,\  Saghrouni Abdelkader $^{3}$
			\footnote { Corresponding author,  E-mail: Abdelkadersaghrouni@gmail.com}
		}\\
		{\small 1.  Nanjing University of Information Science and Technology, 210044, Nanjing, PR China.}\\
		{\small 2.  University of Gafsa, Faculty of Sciences Gafsa, 2112 Gafsa, Tunisia}\\
		{\small 3. University of Sfax, Faculty of Sciences Sfax,  BP
			1171, 3038 Sfax, Tunisia}\\
		{\small 4.  University of Sfax, Faculty of Sciences Sfax,  BP
			1171, 3038 Sfax, Tunisia}
	}
	\date{}
	\maketitle
	\begin{abstract}The present research paper investigates the intricate fields of Hom-Lie algebra  and Hom-Lie coalgebra, providing a complete analysis of their key concepts and important examples. Precisely, the paper introduces the concept of Hom-Lie coderivation pairs and demystifies its duality with Hom-Lie derivation pairs, inspecting pertinent facts such as representation and semi-direct product. Furthermore, the study examines the connection between Hom-Lie, pre-Lie, and Ass-Coder pairs with the use of crucial operators such as commutator, Rota-Baxter operator, and endomorphism operator. Finally, the paper concludes by presenting the construction of Hom-pre-Lie coderivation pairs through a dual to an endomorphism operator.
	\end{abstract}
	
	\textbf{Key words}:\, $\alpha$-Coderivation, Rota-Baxter operators, endomorphism operators
	
	\textbf{Mathematics Subject Classification}
	
	\numberwithin{equation}{section}
	\tableofcontents
	\section{Introduction}The notion of the algebraic Hom-structures was first introduced by Jonas T. Hartwig, Daniel Larson, and Sergei Silvestrov in \cite{H1} where they develop an approach to deformations of the Witt and Virasoro algebra based on $\sigma$-derivation. In the same paper, they define a Hom-Lie algebra $(L,\xi)$ as a non associative algebra $L$ together with an algebra homomorphism $\xi:L \rightarrow L$ such that the Jacobi identity is switched as follow 
\begin{equation*}
    \circlearrowleft_{x,y,z}[(id+\xi)(x),[y,z]_L]_L=0.
\end{equation*}
Theory of algebraic Hom-structures was developed in \cite{A2, M6, Y1, Y2}, where the Jacobi identity was introduced with a linear commuting structural map $\a$, as follows
\begin{equation*}
    \circlearrowleft_{x,y,z}[\alpha(x),[y,z]_L]_L=0.
\end{equation*}
Since 2007 the algebraic Hom structures had attracted many researchers in different fields and extended in various algebras like Lie algebras, associative algebras, and Leibniz algebras. In 1980 Michaelis introduce Lie coalgebras as the dual of Lie algebras leading to the study of Lie bialgebras, quantum groups, and other fields like Harrison's cohomology of commutative algebras, homotopy theory, and noncommutative geometry (see \cite{M2} for more details). Based on the theory of Lie coalgebras and the Hom structures, Makhlouf Abdenacer and Sergei Silvestrov introduced Hom-coalgebras in \cite{M5}. \\
The notion of algebraic Hom structures was also studied to connect different algebras to each other by the commutator bracket or by some operators like Rota-Baxter and endomorphism operators. \\
The following diagrams show some connections between Hom-Lie, Hom pre-Lie, and Hom-coassociative coalgebras.
\begin{center}
	$\xymatrix{\textbf{\emph{pre-Hom-Lie coalgebras pair}}&\underrightarrow{ commutator} &\textbf{\emph{Hom-Lie coalgebras}} }$
\end{center}
\begin{center}
	$\xymatrix{
		\textbf{\emph{Hom-coassociative coalgebras pair}}&\underrightarrow{ commutator} &\textbf{\emph{Hom-Lie coalgebras}} }$
\end{center}
\begin{center}
	$\xymatrix{ \textbf{\emph{pre-Hom-Lie coalgebras pair}}&\underrightarrow{ R.B.O~(\lambda=0)} &\textbf{\emph{Hom-Lie coalgebras}} }$
\end{center}
The notion of Hom-algebras is not studied until now in  reference to the derivation pairs. In \cite{T2} authors introduced the Lie Der pairs structure which is defined as a Lie algebra equipped with a derivation and studied its cohomology and its related deformation. Inspired by the work of Michaelis in \cite{M2}, Asif et. al in \cite{A1}, dualize the notion of Lie-Der pairs to the Lie-CoDer pair (Lie coalgebra equipped with a coderivation). Where the connections between Lie CoDer pairs, pre-Lie CoDer pairs, and AssCoDer pairs (Ass CoDer pairs were studied in \cite{D2}) were studied in detail. In this paper, we extend the structure of Lie CoDer pairs to the  \textbf{Hom-Lie CoDer} pairs and provided useful constructions. In particular,  In Section $2$, we recall some important definitions and examples related to Hom-Lie algebras and Hom -Lie coalgebras to support our results in the later sections. In Section $3$, we define the Hom-Lie coderivation pair, which is dual to a Hom-Lie derivation pair. We describe this concept with the help of some examples and provide important propositions and lemmas. Moreover, we study some basic results like representation and \textbf{semidirect product} and we show that the dual of \textbf{Hom-Lie Der pairs} is just the \textbf{Hom-Lie CoDer pairs} and vice versa. In Section 4, we study the connection between \textbf{Hom Lie, pre-Lie, Ass Coder pairs} via the commutator $\Delta=\Delta_0-\tau \circ \Delta_0$, Rota-Baxter, and endomorphism operators. In particular, we combine the Rota-Baxter operator with the Hom-LieDer pair and dualize it to the case of the Hom-Lie CoDer pair which leads us to define the Rota-Baxter Hom-LieDer pair, the Rota-Baxter Hom-Lie CoDer pair, and Rota-Baxter Hom-pre-Lie CoDer pair. We also constructed Hom-pre-Lie CoDer pair with the help of a dual to an endomorphism operator. In this paper, we consider all the vector spaces and tensor products are over the field $\mathbb{K}$ of characteristic $0$. 
\section{Preliminaries} For any linear spaces $V$ and $W$ we have the flip map $\tau_{(V, W)}: V \otimes W \rightarrow W \otimes V$ defined by $v \otimes w \mapsto w \otimes v$ for $v \in V, w\in W$ if there is no confusion we simply denote $\tau$ instead of $\tau_{(V, W)}$. Denote by $\xi$ the cyclical permutation map given by 
 \begin{align*}
 	\xi&:x \otimes y \otimes z \mapsto y \otimes z \otimes x \\
 	\xi^2&:x \otimes y \otimes z \mapsto z \otimes x \otimes y \\
 	\tau^{12}&:x \otimes y \otimes z \mapsto y \otimes x \otimes z
 \end{align*}
 \begin{defn} \label{def1.1} \cite{M2}
 	A Lie coalgebra over $\mathbb{K}$  is a pair $(L,\Delta)$ of a linear space $L$ and a linear map $\Delta : L \rightarrow L \otimes L$ where the following two conditions holds 
 	\begin{enumerate}
 		\item [1)] $Im(\Delta) \subset Im(1-\tau)$,
 		\item [2)] $(1+\xi+\xi^2) \circ (1 \otimes \Delta) \circ \Delta=0:L \rightarrow L \otimes L \otimes L$.
 	\end{enumerate}
 \end{defn}
Here the map $\Delta$ is called the Lie co-bracket of $L$, we can write the two previous conditions respectively 
\begin{enumerate}
	\item [3)] $\Delta=- \tau \circ \Delta$,
	\item [4)] $(1 \otimes \Delta) \circ \Delta - (\Delta \otimes 1) \circ \Delta +(1 \otimes \tau) \circ (\Delta \otimes 1) \circ \Delta=0.$ 
\end{enumerate}

 \begin{defn} \label{def1.2} \cite{S1}
 	An associative coalgebra over $\mathbb{K}$ is a pair $(L,\Delta)$ where $L$ is a vector space and $\Delta :L \rightarrow L \otimes L$ is a linear map such that 
 	$$ (1 \otimes \Delta)\Delta=(\Delta \otimes 1) \Delta$$
 \end{defn}
 The notion of Hom-coalgebras was introduced first in \cite{M5}. 
 A Hom-coalgebra is a triple $(L,\Delta,\alpha)$ where $L$ is a vector space, $\Delta:L \rightarrow L\otimes L$ and $\alpha:L \rightarrow L$ are linear maps.
 \begin{defn} \label{def1.3} \cite{M5}
 \begin{enumerate}
     \item [1)]  A Hom-coassociative coalgebra is a Hom-coalgebra $(L,\Delta,\alpha)$ such that the following identity holds 
     \begin{equation*}
         (\alpha \otimes \Delta)\circ \Delta=(\Delta \otimes \alpha)\circ \Delta 
     \end{equation*}
      \item [2)] A Hom-Lie coalgebra is a Hom-coalgebras $(L,\Delta,\alpha)$ such that the following identity holds 
    \begin{equation*}
        (1+\xi+\xi^2) \circ (\alpha \otimes \Delta) \circ \Delta=0
    \end{equation*}
    \end{enumerate}
 \end{defn}
 Remark that if $\alpha:L \rightarrow L$ satisfy  $(\alpha \otimes \alpha) \circ \Delta=\Delta \circ \alpha$, then we say that $(L,\Delta,\alpha)$ is a multiplicative Hom-coalgebra.\\
Let $(L,\Delta,\alpha)$ be a Hom-coalgebra with an $\alpha$-coderivation $\varphi_L$ then a new coalgebra structure $(L,\Delta,\alpha,\varphi_L)$ rises which it is denoted by \textbf{Hom-CoDer pair} . The word pair meant Hom-coalgebra $L$ and $\alpha$-coderivation $\varphi_L$. where $\alpha$-coderivation $\varphi_L$ is a linear map of $L$ such that $\Delta \circ \varphi_L=(\varphi_L \otimes \alpha) \Delta+(\alpha \otimes \varphi_L) \Delta$ and the set of all $\alpha$-coderivation of $L$ is denoted by $CoDer_{\alpha}(L)$ . If $\Delta$ denotes a Lie co-bracket then we  obtain a \textbf{Hom-Lie CoDer pair} and if $\Delta$ is an associative co-bracket then we obtain a \textbf{Hom-AssCoDer pair}. \\
Let's recall some results about Hom-coalgebras. 
\begin{defn}{(Hom-coalgebras morphism)} \label{def1.4} \cite{B1} \\
	Let $(L,\Delta,\alpha)$ and $(L',\Delta',\alpha')$ be two hom-coalgebras (Lie or associative). A linear map $h :L \rightarrow L'$ is a morphism of Hom-coalgebras (Lie or associative) if and only if 
	\begin{equation*}
		(h \otimes h)\Delta=\Delta'\circ h \text{ and } h \circ \alpha=\alpha \circ h.
	\end{equation*} 
\end{defn}
\begin{defn} \cite{B1}  \label{def1.5}
	Let $(L,\Delta,\alpha)$ be an Hom-Lie coalgebra and $(M,\beta)$ be a Hom-module, if there exists a linear map $\rho_{\beta}:M \rightarrow L \otimes M$ such 
	\begin{enumerate}
		\item[i)] $\rho_{\beta} \circ \beta=(\alpha \otimes \beta) \circ \rho_{\beta},$ 
		\item[ii)] $(\Delta \otimes \beta) \circ \rho_{\beta}=(\alpha \otimes \rho_{\beta}) \circ \rho_{\beta}-(\tau \otimes id_M)(\alpha \otimes \rho_{\beta}) \circ \rho_{\beta}.$
	\end{enumerate}
Then $M$ is called a $L$-comodule.
\end{defn}
\begin{defn} \cite{M3} \label{1.6}
	Let $\lambda \in \mathbb{K}$, a triple $(L,\Delta,R)$ is called a Rota-Baxter coalgebra of weight $\lambda$ if $(L,\Delta)$ is a coalgebra and $R$ is a linear map from $L$ to $L$ such that for all $l \in L$ 
	\begin{equation} \label{eq0.1}
		R(l_1) \otimes R(l_2)=R(l)_1\otimes R(R(l)_2)+R(R(l)_1) \otimes R(l)_2+\lambda R(l)_1 \otimes R(l)_2.
	\end{equation}
Here $R$ is called a Rota-Baxter operator.
\end{defn}
\begin{re}
	A Rota-Baxter operator $R$ on a Hom-coalgebra satisfies the same equation \eqref{eq0.1} including $R \circ \alpha=\alpha \circ R$.
\end{re}
Throughout this paper we use Sweedler's notation \cite{S1}
\begin{equation*}
	\Delta(l)=l_0 \otimes l_1,\text{ for } l,l_0,l_1 \in L.
\end{equation*}
For example: $(\alpha \otimes \Delta)\Delta(l)=(\alpha \otimes \Delta)(l_1 \otimes l_2)=\alpha(l_1) \otimes l_{21} \otimes l_{22}$ .
	\section{ Hom-Lie CoDer pairs} \label{sec3}
Due to the significant importance of Hom-algebra structures, all the results obtained in the classical algebras (Lie algebras, associative algebras, Leibniz algebras, etc) are twisted, via a linear map $\alpha$ of $L$, to the case of Hom-algebraic structures. Furthermore, As a dual case to Hom algebra, many researchers have investigated the Hom-coalgebra structure in \cite{B1, C1, Y1} .\\
In this section, we introduce the structure of \textbf{Hom-Lie CoDer pairs} and study some basic results on it.

	\begin{defn} \label{def2.1}
		A  \textbf{Hom-LieCoDer pair} consists of a  Hom-Lie coalgebra $(L,\Delta,\alpha)$ and an $\alpha$-coderivation $\varphi_L$. 
		We denoted by $(L,\Delta,\varphi_L,\alpha)$. 
	\end{defn}
 We say that $(L,\Delta,\varphi_L,\alpha)$ is a multiplicative \textbf{Hom-LieCoDer pair} if $(L,\Delta,\alpha)$ is a multiplicative Hom-Lie coalgebra.
\begin{defn} \label{def2.2}
	Let $(L_1,\Delta_1,\varphi_{L_1},\alpha)$ and $(L_2,\Delta_2,\varphi_{L_2},\alpha)$ are two \textbf{Hom-Lie CoDer pairs}, a  morphism $\psi:L_1 \rightarrow L_2$ is an Hom-Lie coalgebra morphism (see definition \eqref{def1.4}) if it satisfies the following equations 
	\begin{equation}
		\psi \circ \varphi_{L_1}=\varphi_{L_2} \circ \psi.
		\label{eq2.1}
	\end{equation}
\end{defn}
In \cite{M1} Shahn Majid defines the Lie comodule (or right Lie comodule) of Lie coalgebra $(L,\Delta)$, it is further studied in \cite{D1} in reference to cohomology theory. Based on the stated  references, we define the \textbf{Hom-Lie CoDer} comodule as follow
\begin{defn} \label{def2.3}
	Let $(L,\Delta,\varphi_L,\alpha)$ be an \textbf{Hom-Lie CoDer pair}, a \textbf{Hom-LieCoDer} comodule $(M,\rho_{\beta},\varphi_M,\beta)$ is a qudruple, in whiich $(M,\rho_{\beta},\beta)$ as an Hom-Lie comodule of $(L,\Delta,\alpha)$ and a linear map $\varphi_M:M \rightarrow M$ is a coderivation map, satisfying
	\begin{equation}
		\label{eq2.2}
		\rho_{\beta} \circ \varphi_M(m)=(\varphi_M(m) \otimes \alpha ) \rho_{\beta}(m)+(\beta \otimes \varphi_L) \rho_{\beta}(m), \text{ for }m \in M.
	\end{equation} 
\end{defn}
\begin{ex} \label{ex2.1}
	Let $(L,\Delta,\varphi_L,\alpha)$ be an \textbf{Hom-Lie CoDer pair}. Then $(L \otimes L,\rho_{ad_{\alpha}},\varphi_L,\alpha)$ is an Hom-Lie-CoDer comodule of $L$, with 
	\begin{equation}
		\label{eq2.3}
		\rho_{ad_{\alpha}}=(\alpha \otimes \Delta)-\xi \circ (\Delta \otimes \alpha): L \otimes L \rightarrow L \otimes L \otimes L \otimes L.
	\end{equation}
\end{ex}
In the next proposition, we investigate the case of  direct sum $(L \oplus M)$, where $(L,\Delta,\varphi_L,\alpha)$ is the \textbf{Hom-Lie CoDer pair} and $(M,\rho_{\beta},\varphi_M,\beta)$  is its \textbf{Hom-Lie CoDer} comodule.
\begin{prop}
	\label{prop2.2}
Let $(L,\Delta,\varphi_L,\alpha)$ be an Hom-Lie-CoDer pair, $M$ is a linear space, $\rho_{\beta} :M \rightarrow M \otimes L$ and $\varphi_M : M \rightarrow M$ a linear map. Then $(M,\rho_{\beta},\varphi_M,\beta)$ is an Hom-Lie-CoDer comodule of $L$ if and only if $(L \oplus M,\widetilde{\Delta},\widetilde{\varphi},\widetilde{\alpha})$ is an Hom-Lie-CoDer pair such that
\begin{align*}
	\widetilde{\Delta}(x,m)&=\Delta(x)+\rho_{\beta}(m)- \tau \circ (\rho_{\beta}(m)), \\
	\widetilde{\varphi}(x,m)&=(\varphi_L(x),\varphi_M(m)), \\
	\widetilde{\alpha}(x,m)&=(\alpha(x),\beta(m)),
\end{align*}
where $x \in L $ and $m \in M$. 
Which is called \textbf{ semidirect product } of the \textbf{Hom-Lie CoDer pair} $(L,\Delta,\varphi_L,\alpha)$ and its comodule.
\end{prop}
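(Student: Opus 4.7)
The plan is to prove the ``iff'' by expanding both defining conditions of a Hom-Lie-CoDer pair on $L \oplus M$ and then splitting each resulting identity into its pure $L$-part and its mixed part involving $M$. For the forward direction I would assume the comodule axioms of Definitions~\ref{def1.5} and~\ref{def2.3} and deduce the co-Jacobi and coderivation identities for $(\widetilde{\Delta}, \widetilde{\varphi}, \widetilde{\alpha})$; for the converse I would feed elements of the form $(x,0)$ and $(0,m)$ into those identities and project onto the isotypic summands of $(L \oplus M)^{\otimes n}$ to read off the axioms on $M$ one at a time.

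I would begin with the easy checks. Antisymmetry $\widetilde{\Delta} = -\tau \circ \widetilde{\Delta}$ is immediate: $\Delta$ is already antisymmetric, and the term $\rho_\beta(m) - \tau \circ \rho_\beta(m)$ is antisymmetric by construction. The $\widetilde{\alpha}$-coderivation property of $\widetilde{\varphi}$ is verified by applying $\widetilde{\Delta} \circ \widetilde{\varphi}$ and $(\widetilde{\varphi} \otimes \widetilde{\alpha})\widetilde{\Delta} + (\widetilde{\alpha} \otimes \widetilde{\varphi})\widetilde{\Delta}$ to $(x,m)$ and comparing summands: the $L \otimes L$ piece is precisely the coderivation axiom on $L$, while the mixed $L \otimes M$ and $M \otimes L$ pieces, after absorbing the $\tau \circ \rho_\beta$ contribution, collapse to equation~\eqref{eq2.2}. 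Multiplicativity $(\widetilde{\alpha} \otimes \widetilde{\alpha})\widetilde{\Delta} = \widetilde{\Delta} \circ \widetilde{\alpha}$, whenever needed, reduces to the corresponding properties on $L$ together with Definition~\ref{def1.5}(i).

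The main obstacle will be the Hom-Lie co-Jacobi identity $(1 + \xi + \xi^2)(\widetilde{\alpha} \otimes \widetilde{\Delta}) \circ \widetilde{\Delta}(x,m) = 0$. Expanding $\widetilde{\Delta}$ twice yields a sum indexed by three choices at each step, namely $\Delta$, $\rho_\beta$, and $-\tau \circ \rho_\beta$, and the cyclic symmetrizer must be carefully tracked as it moves the $M$-slot among the three tensor positions. Terms containing no $M$-factor vanish by the co-Jacobi identity on $(L,\Delta,\alpha)$. Terms containing exactly one $M$-factor, after repeated use of $\tau \circ \Delta = -\Delta$ and reindexing, collect into Definition~\ref{def1.5}(ii). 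This bookkeeping---matching the resulting groups of terms with the two sides of the comodule identity via the antisymmetry of $\Delta$ and the symmetric form of $\widetilde{\Delta}$ on $M$---is the only real content of the argument. The converse follows by reading the same computation backward and exploiting the projections onto the $L$- and $M$-summands described above.
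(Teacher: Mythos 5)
Your proposal is correct, and on the only point the paper actually argues it coincides with the paper's proof: the verification that $\widetilde{\varphi}$ is an $\widetilde{\alpha}$-coderivation by comparing the $L\otimes L$ component (the coderivation axiom for $\varphi_L$) with the mixed $L\otimes M$ and $M\otimes L$ components (equation \eqref{eq2.2}) is exactly the computation the paper carries out. The difference is one of scope: the paper disposes of the equivalence between ``$(M,\rho_{\beta},\beta)$ is a Hom-Lie comodule'' and ``$(L\oplus M,\widetilde{\Delta},\widetilde{\alpha})$ is a Hom-Lie coalgebra'' by citing \cite{D1}, whereas you sketch it in full --- antisymmetry of $\widetilde{\Delta}$, the splitting of the cyclically symmetrized co-Jacobi identity into its pure-$L$ part (co-Jacobi on $L$) and its single-$M$-factor part (Definition \ref{def1.5}(ii)), and the projection onto summands of $(L\oplus M)^{\otimes n}$ for the converse. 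That bookkeeping is sound: every term of $(\widetilde{\alpha}\otimes\widetilde{\Delta})\circ\widetilde{\Delta}(0,m)$ carries exactly one $M$-tensorand while the terms coming from $(x,0)$ carry none, so the grading by number of $M$-factors you implicitly use is well defined and the identity does decompose axiom by axiom. Your version is therefore more self-contained than the paper's, at the cost of the lengthier cyclic-symmetrizer computation; the paper's version is shorter but leans entirely on the external reference for the coalgebra/comodule half of the ``if and only if.''
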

\begin{proof}
For	the first part of the proof see \cite{D1}, we need just to prove that $\widetilde{\varphi}$ is an $\widetilde{\alpha}$-coderivation. \\ Let $(x,m) \in (L \oplus M)$.
\begin{align*}
	\widetilde{\Delta} \circ \widetilde{\varphi}(x,m)&=\widetilde{\Delta}(\varphi_L(x),\varphi_M(m)) \\
	&=\Delta(\varphi_L(x))+\rho_{\beta}(\varphi_M(m))- \tau \circ (\rho_{\beta}(\varphi_M(m))) .
\end{align*}
On the first hand, we have 
\begin{equation} \label{eq2.4}
	(\widetilde{\varphi} \otimes \widetilde{\alpha}) \circ \widetilde{\Delta}(x,m) = (\varphi_L \otimes \alpha) \Delta(x)+(\varphi_M \otimes \alpha) (\rho_{\beta}(m))-(\varphi_M \otimes \alpha) (\tau \circ (\rho_{\beta}(m))).
\end{equation}
On the second hand, we have
\begin{equation}\label{eq2.5}
	(\widetilde{\alpha} \otimes  \widetilde{\varphi} ) \circ \widetilde{\Delta}(x,m) = (\alpha \otimes  \varphi_L) \Delta(x)+( \beta \otimes \varphi_L ) (\rho_{\beta}(m))-( \alpha_M \otimes \varphi_L ) (\tau \circ (\rho_{\beta}(m))).
\end{equation}
Now we combine  equations \eqref{eq2.4} and \eqref{eq2.5} and using \eqref{eq2.2}, we obtain
\small{
\begin{align*}
	(\widetilde{\varphi} \otimes \widetilde{\alpha}) \circ \widetilde{\Delta}+(\widetilde{\alpha} \otimes  \widetilde{\varphi} ) \circ \widetilde{\Delta}(x,m)&= 
	(\varphi_L \otimes \alpha + \alpha \otimes  \varphi_L) \Delta(x) + (\varphi_M \otimes \alpha + \beta \otimes \varphi_L) (\rho_{\beta}(m))- (\varphi_M \otimes \alpha + \beta \otimes \varphi_L )(\tau \circ (\rho_{\beta}(m))) \\
	&=\Delta \circ \varphi_L(x)+ \rho_{\beta} \circ \varphi_M(m)-\tau \circ (\rho_{\beta} \circ \varphi_M(m)) \\
	&=\widetilde{\Delta} \circ \widetilde{\varphi}(x,m).
\end{align*}
}
This completes the proof.
\end{proof}	
\begin{prop} \label{prop2.3} 
	Let $(L_1,\Delta_1,\varphi_{L_1},\alpha_1)$ and $(L_2,\Delta_2,\varphi_{L_2},\alpha_2)$ be two \textbf{Hom-Lie-CoDer pairs}, so the quadruple $(L_1 \oplus L_2,\Delta_{L_1 \oplus L_2},\varphi_{L_1 \oplus L_2},\alpha_{L_1 \oplus L_2})$ is a \textbf{Hom-Lie-CoDer pair} with :
	\begin{eqnarray}
		\Delta_{L_1 \oplus L_2}:L_1 \oplus L_2 \rightarrow (L_1 \oplus L_2) \otimes (L_1 \oplus L_2) \ &;& \ (x,y) \mapsto \Delta_1(x)+ \Delta_2(y),\label{eq2.6} \\
		\varphi_{L_1 \oplus L_2} : L_1 \oplus L_2 \rightarrow L_1 \oplus L_2 \ &;& \ (x,y) \mapsto (\varphi_{L_1}(x),\varphi_{L_2}(y)),\label{eq2.7} \\
		\alpha_{L_1 \oplus L_2}: L_1 \oplus L_2 \rightarrow L_1 \oplus L_2 \ &;& \ (x,y) \mapsto (\alpha_1(x),\alpha_2(y)).\label{eq2.8}
	\end{eqnarray}
 Where $x \in L_1$ and $y \in L_2$
\end{prop}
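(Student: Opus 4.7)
The plan is to verify the two defining properties of a Hom-Lie CoDer pair for the quadruple $(L_1 \oplus L_2,\Delta_{L_1 \oplus L_2},\varphi_{L_1 \oplus L_2},\alpha_{L_1 \oplus L_2})$: first that $(L_1 \oplus L_2,\Delta_{L_1 \oplus L_2},\alpha_{L_1 \oplus L_2})$ is a Hom-Lie coalgebra in the sense of Definition \ref{def1.3}, and second that $\varphi_{L_1 \oplus L_2}$ is an $\alpha_{L_1 \oplus L_2}$-coderivation. Because all three structure maps act componentwise and $\Delta_{L_1 \oplus L_2}(x,y)=\Delta_1(x)+\Delta_2(y)$ lands in $L_1\otimes L_1 \oplus L_2\otimes L_2 \subset (L_1\oplus L_2)^{\otimes 2}$, there are no ``cross terms'' that mix $L_1$ and $L_2$, so every identity to be checked splits cleanly into an $L_1$-part and an $L_2$-part.

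Concretely, I would first check skew-symmetry: since $\tau$ preserves each summand $L_i\otimes L_i$, the condition $\Delta_{L_1 \oplus L_2}=-\tau\circ\Delta_{L_1 \oplus L_2}$ on an element $(x,y)$ reduces to the two separate skew-symmetries $\Delta_i=-\tau\circ\Delta_i$ that hold by hypothesis. Next I would verify the Hom-co-Jacobi identity
\[
(1+\xi+\xi^2)\circ(\alpha_{L_1 \oplus L_2}\otimes \Delta_{L_1 \oplus L_2})\circ \Delta_{L_1 \oplus L_2}=0,
\]
by expanding $(\alpha_{L_1 \oplus L_2}\otimes \Delta_{L_1 \oplus L_2})\Delta_{L_1 \oplus L_2}(x,y)$; since $\alpha_{L_1 \oplus L_2}$ and $\Delta_{L_1 \oplus L_2}$ both respect the decomposition, this equals $(\alpha_1\otimes\Delta_1)\Delta_1(x) + (\alpha_2\otimes\Delta_2)\Delta_2(y)$, and applying $1+\xi+\xi^2$ (which also preserves $L_i^{\otimes 3}$) gives zero on each summand by the co-Jacobi identity of $(L_i,\Delta_i,\alpha_i)$.

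For the coderivation condition, I would compute both sides of
\[
\Delta_{L_1 \oplus L_2}\circ \varphi_{L_1 \oplus L_2} = (\varphi_{L_1 \oplus L_2}\otimes \alpha_{L_1 \oplus L_2})\Delta_{L_1 \oplus L_2} + (\alpha_{L_1 \oplus L_2}\otimes \varphi_{L_1 \oplus L_2})\Delta_{L_1 \oplus L_2}
\]
applied to $(x,y)$. The left-hand side becomes $\Delta_1(\varphi_{L_1}(x))+\Delta_2(\varphi_{L_2}(y))$, and the right-hand side becomes $[(\varphi_{L_1}\otimes\alpha_1)+(\alpha_1\otimes\varphi_{L_1})]\Delta_1(x) + [(\varphi_{L_2}\otimes\alpha_2)+(\alpha_2\otimes\varphi_{L_2})]\Delta_2(y)$; these match by the two separate $\alpha_i$-coderivation conditions.

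There is no real obstacle here: the proof is essentially a bookkeeping argument that, because $\Delta_{L_1 \oplus L_2}$ has no components landing in $L_1\otimes L_2$ or $L_2\otimes L_1$, the whole structure is a ``block diagonal'' of the two given structures, and every axiom is checked one block at a time. The only subtlety worth flagging explicitly is that $\alpha_{L_1\oplus L_2}$ and $\varphi_{L_1\oplus L_2}$ are defined componentwise with possibly different $\alpha_1,\alpha_2$ (and similarly for $\varphi_{L_i}$), so one should confirm that no identity forces $\alpha_1=\alpha_2$ on the overlap --- which it does not, precisely because $\Delta_{L_1 \oplus L_2}$ never mixes the two summands.
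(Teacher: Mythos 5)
Your proof is correct. The paper in fact states this proposition without giving a proof, but your componentwise (``block diagonal'') verification of skew-symmetry, the Hom-co-Jacobi identity, and the $\alpha_{L_1\oplus L_2}$-coderivation condition is precisely the argument the paper carries out for the dual statement (Proposition \ref{prop2.6}, the direct sum of two Hom-LieDer pairs), so your approach is essentially the intended one.
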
	

Lie coalgebra was preferably introduced by Walter Michaelis in \cite{M2} as dually to Lie algebra and the inverse way is true too (Lie coalgebra gives rise to Lie algebra by dualizing). This means to define \textbf{Hom-Lie-CoDer pair} properly we must study its dual relation with \textbf{Hom-Lie-Der pair}. \\
So we introduce the notion of \textbf{Hom-LieDer pairs} in which
	we consider only the multiplicative Hom-Lie algebras.
\begin{defn} \label{def2.4}
	A multiplicative \textbf{Hom-LieDer pair} consists of a multiplicative Hom-Lie algebra $(L,[\cdot,\cdot],\alpha)$ and an $\alpha$-derivation $\varphi_L \in Der_{\alpha}(L)$, we denote \textbf{Hom-LieDer pair}
	$(L,[\cdot,\cdot],\varphi_L,\alpha)$ or simply $(L, \varphi_L,\alpha)$.
\end{defn}

In the sequel, we consider only multiplicative Hom-LieDer pair.

\begin{defn} \label{def2.5}
	Let $(L,[\cdot,\cdot]_L,\varphi_L,\alpha )$ and $(K,[\cdot,\cdot]_K,\varphi_{K},\beta )$ be two \textbf{Hom-LieDer pairs}, a morphism from $L$ to $K$ is a Hom-Lie algebra morphism $H : L \rightarrow K$ such that
	\begin{equation} \label{eq2.9}
		H \circ \varphi_L = \varphi_{K} \circ H.
	\end{equation}
\end{defn}
Let's define the representation of \textbf{Hom-LieDer pair}
\begin{defn}
	A representation of $(L,\varphi_L,\alpha)$ on a vector space $V$ is a pair $(\rho_A,\varphi_V)$ where $\rho_A : L \rightarrow gl(V)$, with respect to $A \in gl(V)$ and $\varphi_V : V \rightarrow V$ a linear map, such that $\rho_A$ is a representation of Hom-Lie algebra $L$ on $V$ and satisfies :
	\begin{equation} \label{eq2.10}
		\varphi_V \circ \rho_A(x) - \rho_A(\alpha(x)) \circ \varphi_V=\rho_A(\varphi_L(x)) \circ A
	\end{equation}
	we denote such a representation by $(V;\rho_A,\varphi_V)$
\end{defn}
\begin{ex}
	$(L;ad,\varphi_L)$ is a representation of the \textbf{Hom-LieDer pair} $(L,\alpha,\varphi_L)$ which is called the adjoint representation of $(L,\varphi_L,\alpha)$.
\end{ex}
If we define the representation of \textbf{Hom-LieDer pair} it is necessary to define the semi-direct product of the \textbf{Hom-LieDer pair} and its representation
\begin{prop} Let $(L,\varphi_L,\alpha)$ be a \textbf{Hom-LieDer pair} and $(V;\rho_{\beta},\varphi_V)$ be a representation of it. Then $(L \oplus V, \varphi_L + \varphi_V,\alpha + \beta)$ is a \textbf{Hom-LieDer pair} where the bracket on the Hom-Lie algebra  $(L \oplus V)$ is defined by : \\
	$[x+X,y+Y]_{L \oplus V}=\Big([x,y], \rho_{\beta}(x)(Y)-\rho_{\beta}(y)(X) \Big)$.\\
	and $(\alpha + \beta)$ is defined by : \\
	\begin{center}
		$\alpha + \beta : L \oplus V \rightarrow L \oplus V$;$(x,X) \mapsto \alpha(x) + \beta(X)$
	\end{center}
and $\varphi_L +\varphi_V$ is defined by :
\begin{center}
	$\varphi_L +\varphi_V: L\oplus V \rightarrow L \oplus V \ ; \ (x,X) \mapsto \varphi_L(x)+\varphi_V(X).$
\end{center} for all $x,y\in L,~~ X,Y \in V$. Such a \textbf{Hom-LieDer pair} is called the \textbf{semi-direct product} of $(L,\varphi_L,\alpha)$ and $(V;\rho_{\beta},\varphi_V)$.
\end{prop}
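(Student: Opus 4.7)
The plan is to split the verification into two independent parts: first, that $(L \oplus V,[\cdot,\cdot]_{L\oplus V},\alpha+\beta)$ forms a multiplicative Hom-Lie algebra (this is the standard semi-direct product construction for representations of Hom-Lie algebras), and second, that $\widetilde{\varphi}:=\varphi_L+\varphi_V$ is an $(\alpha+\beta)$-derivation of that bracket (this is where the extra datum of the Hom-LieDer pair enters, through equation \eqref{eq2.10}). Throughout, I will write a typical element of $L\oplus V$ as $x+X$ with $x\in L$ and $X\in V$.

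For the Hom-Lie algebra part, I would check the three required properties componentwise. Skew-symmetry is immediate from the skew-symmetry of $[\cdot,\cdot]_L$ together with the explicit formula $\rho_\beta(x)Y-\rho_\beta(y)X$, which flips sign under $(x+X)\leftrightarrow(y+Y)$. Multiplicativity of $\alpha+\beta$ on the bracket reduces on the $L$-component to multiplicativity of $\alpha$, and on the $V$-component to the intertwining relation $\beta\circ\rho_\beta(x)=\rho_\beta(\alpha(x))\circ\beta$, which is part of the data of a representation. For the Hom-Jacobi identity, I would expand the cyclic sum applied to $(\alpha+\beta)(x_1+X_1)$ and $[x_2+X_2,x_3+X_3]_{L\oplus V}$: the pure $L$-part collapses to the Hom-Jacobi identity of $L$, and the three $V$-terms collapse to the representation identity that $\rho_\beta$ satisfies on $L$.

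The heart of the proof is the derivation property for $\widetilde\varphi$. I first note $\widetilde\varphi\circ(\alpha+\beta)=(\alpha+\beta)\circ\widetilde\varphi$, which comes for free from $\varphi_L\circ\alpha=\alpha\circ\varphi_L$ (built into the Hom-LieDer pair) and $\varphi_V\circ\beta=\beta\circ\varphi_V$ (built into the representation). Applying $\widetilde\varphi$ to $[x+X,y+Y]_{L\oplus V}$ gives on the $L$-component exactly $\varphi_L[x,y]_L$, which matches the derivation expansion thanks to $\varphi_L\in\mathrm{Der}_\alpha(L)$. The $V$-component of $\widetilde\varphi[x+X,y+Y]_{L\oplus V}$ is $\varphi_V(\rho_\beta(x)Y)-\varphi_V(\rho_\beta(y)X)$, and the $V$-component of $[\widetilde\varphi(x+X),(\alpha+\beta)(y+Y)]_{L\oplus V}+[(\alpha+\beta)(x+X),\widetilde\varphi(y+Y)]_{L\oplus V}$ is
\begin{equation*}
\rho_\beta(\varphi_L(x))\beta(Y)-\rho_\beta(\alpha(y))\varphi_V(X)+\rho_\beta(\alpha(x))\varphi_V(Y)-\rho_\beta(\varphi_L(y))\beta(X).
\end{equation*}
Matching the two sides amounts to two applications of \eqref{eq2.10} (one for the variable $x$ acting on $Y$, one for $y$ acting on $X$), with $A=\beta$; each application rewrites $\varphi_V\circ\rho_\beta(x)$ as $\rho_\beta(\alpha(x))\circ\varphi_V+\rho_\beta(\varphi_L(x))\circ\beta$, and the two contributions line up term-by-term.

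The main obstacle, as indicated, is the derivation identity on the $V$-component: everything else is a bookkeeping exercise that reduces to established axioms. The subtlety there is that the derivation rule in the Hom setting mixes $\varphi_L,\varphi_V,\alpha,\beta$ nontrivially, and it is precisely the compatibility \eqref{eq2.10} in Definition~2.6 (with $A=\beta$) that is designed to make this match work. Once that identity is invoked twice, the two sides agree and $\widetilde\varphi$ is an $(\alpha+\beta)$-derivation, completing the proof.
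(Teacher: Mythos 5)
Your proof is correct and follows essentially the same route as the paper: the paper likewise takes the semi-direct product Hom-Lie algebra structure for granted and concentrates on showing that $\varphi_L+\varphi_V$ is an $(\alpha+\beta)$-derivation, expanding $(\varphi_L+\varphi_V)\big([x+X,y+Y]_{L\oplus V}\big)$ and applying the compatibility \eqref{eq2.10} with $A=\beta$ twice, exactly as you do. Your additional componentwise verification that $(L\oplus V,[\cdot,\cdot]_{L\oplus V},\alpha+\beta)$ is a Hom-Lie algebra is a harmless supplement that the paper leaves implicit.
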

\begin{proof}
	We need just to prove that $(\varphi_L + \varphi_V)$ is an $(\alpha + \beta)$-derivation on $(L \oplus V)$.

	\begin{align*}
		\varphi_L + \varphi_V([x+X,y+Y]_{L \oplus V}) 
		&=\varphi_L + \varphi_V \big([x,y]+\rho_{\beta}(x)(Y)-\rho_{\beta}(y)(X) \big) \\
		&=\varphi_L([x,y])+\varphi_V(\rho_{\beta}(x)(Y))-\varphi_V(\rho_{\beta}(y)(X)) \\
		&=[\varphi_L(x),\alpha(y)]+\rho_{\beta}(\varphi_L(x)) \circ \beta(Y) -\rho_{\beta}(\alpha(y)) \circ \varphi_V(X) \\
		&+[\alpha(x),\varphi_L(y)]+\rho_{\beta}(\alpha(x)) \circ \varphi_V(Y) -\rho_{\beta}(\varphi_L(y)) \circ \beta(X)  \\
		&=[(\varphi_L+\varphi_V)(x+X),(\alpha + \beta)(y+Y)]_{L \oplus V}
		+[(\alpha + \beta)(x+X),(\varphi_L+\varphi_V)(y+Y)]_{L \oplus V}.
	\end{align*}
	This completes the proof.
\end{proof}
	
In the next, proposition we show that the direct sum of two different \textbf{Hom-LieDer pairs} is also an \textbf{Hom-LieDer pair}, (see \cite{S2}  for more details).
\begin{prop} \label{prop2.6}
	Given two \textbf{Hom-LieDer pairs} $(L,[\cdot,\cdot]_L,\varphi_L,\alpha)$ and $(K,[\cdot,\cdot]_{K},\varphi_{K},\beta)$, there is a \textbf{Hom-LieDer pair} $(L \oplus K,[\cdot,\cdot]_{L \oplus K},\varphi_L +\varphi_{K},\alpha + \beta)$ where the skew-symmetric bilinear map \\ $[\cdot,\cdot]_{L \oplus K} \wedge ^2 (L \oplus K) \rightarrow L \oplus K$ is given by
	\begin{center}
		$[(x_1,y_1),(x_2,y_2)]_{L \oplus K}=([x_1,x_2]_L,[y_1,y_2]_{K}),$~~ for all $x_1,x_2 \in L$; $y_1,y_2 \in K$\end{center}

	and a linear map $\alpha + \beta : L \oplus K \rightarrow L \oplus K$ defined by 
	 $(x,y) \mapsto \alpha(x) + \beta(y)$, where $x \in L, y \in K$. 

Also $\varphi_L +\varphi_K$ is defined by :
\begin{center}
	$\varphi_L +\varphi_K: L\oplus K \rightarrow L \oplus K \ ; \ (x,y) \mapsto \varphi_L(x)+\varphi_K(y).$
\end{center}
\end{prop}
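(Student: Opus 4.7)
The plan is a direct componentwise verification, split into two parts: first, confirming that $(L\oplus K,[\cdot,\cdot]_{L\oplus K},\alpha+\beta)$ is a multiplicative Hom-Lie algebra; second, checking that $\varphi_L+\varphi_K$ is an $(\alpha+\beta)$-derivation on it. The key structural observation is that the bracket on $L\oplus K$ has no cross-terms between the two summands, so every axiom will decouple cleanly into its $L$-part and its $K$-part.

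For the Hom-Lie axioms, skew-symmetry of $[\cdot,\cdot]_{L\oplus K}$ is immediate from the skew-symmetry of $[\cdot,\cdot]_L$ and $[\cdot,\cdot]_K$. Multiplicativity $(\alpha+\beta)\circ[\cdot,\cdot]_{L\oplus K}=[(\alpha+\beta)(\cdot),(\alpha+\beta)(\cdot)]_{L\oplus K}$ evaluated on a pair $((x_1,y_1),(x_2,y_2))$ reduces exactly to the two identities $\alpha[x_1,x_2]_L=[\alpha(x_1),\alpha(x_2)]_L$ and $\beta[y_1,y_2]_K=[\beta(y_1),\beta(y_2)]_K$, which hold by hypothesis. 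For the Hom-Jacobi identity, applied to a triple $((x_i,y_i))_{i=1,2,3}$, the cyclic sum $\circlearrowleft[(\alpha+\beta)(x_1,y_1),[(x_2,y_2),(x_3,y_3)]_{L\oplus K}]_{L\oplus K}$ splits into its $L$-component, which equals $\circlearrowleft[\alpha(x_1),[x_2,x_3]_L]_L=0$, and its $K$-component, which equals $\circlearrowleft[\beta(y_1),[y_2,y_3]_K]_K=0$.

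Next I would verify the derivation property. Applying $\varphi_L+\varphi_K$ to $[(x_1,y_1),(x_2,y_2)]_{L\oplus K}=([x_1,x_2]_L,[y_1,y_2]_K)$ gives $(\varphi_L[x_1,x_2]_L,\varphi_K[y_1,y_2]_K)$. Expanding each component via the $\alpha$-derivation property of $\varphi_L$ on $L$ and the $\beta$-derivation property of $\varphi_K$ on $K$, and then regrouping the four resulting terms into pairs indexed by position, produces precisely
\[
[(\varphi_L+\varphi_K)(x_1,y_1),(\alpha+\beta)(x_2,y_2)]_{L\oplus K}+[(\alpha+\beta)(x_1,y_1),(\varphi_L+\varphi_K)(x_2,y_2)]_{L\oplus K},
\]
as required.

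Because the construction introduces no genuine cross-interaction between the two summands, there is no real obstacle; the only care needed is to organize the verification so that the componentwise splitting is transparent, and to note that the hypothesis of multiplicativity in each factor is what allows $(\alpha+\beta)$ to serve as the structure map of the direct sum.
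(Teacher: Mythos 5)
Your proposal is correct and follows essentially the same route as the paper: the paper simply cites Sheng's result for the fact that $(L\oplus K,[\cdot,\cdot]_{L\oplus K},\alpha+\beta)$ is a Hom-Lie algebra and then performs exactly your componentwise expansion-and-regrouping to verify that $\varphi_L+\varphi_K$ is an $(\alpha+\beta)$-derivation. Your only difference is that you verify the Hom-Lie axioms directly instead of quoting the reference, which is harmless and arguably more self-contained.
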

\begin{proof}
	By \cite{S2}, we have $(L \oplus K,[\cdot,\cdot]_{L \oplus K},\alpha + \beta)$ is a Hom-Lie algebra. We need just to prove that $(\varphi_L+\varphi_{K})$ is an $(\alpha + \beta)$-derivation. \\
	\begin{align*}
		&\varphi_L +\varphi_{K} ([(x_1,y_1),(x_2,y_2)]_{L \oplus K}) \\
		&=\varphi_L +\varphi_{K} ([x_1,x_2]_L,[y_1,y_2]_{K}) \\
		&=\varphi_L ([x_1,x_2]_L) + \varphi_{K} ([y_1,y_2]_{K}) \\
		&=[\varphi_L (x_1),\alpha (x_2)]_L+ [\alpha (x_1),\varphi_L (x_2)]_L + [\varphi_{K}(y_1), \beta (y_2)]_{K}+[\beta (y_1),\varphi_{K}(y_2)]_{K} \\
		&=([\varphi_L (x_1),\alpha (x_2)]_L,[\varphi_{K}(y_1), \beta (y_2)]_{K})+([\alpha (x_1),\varphi_L (x_2)]_L,[\beta (y_1),\varphi_{K}(y_2)]_{K}) \\
		&=[\varphi_L +\varphi_{K}(x_1,y_1),(\alpha + \beta)(x_2,y_2)]_{L \oplus K} + [(\alpha + \beta)(x_1,y_1),\varphi_L +\varphi_{K}(x_2,y_2)]_{L \oplus K}.
	\end{align*}
	This completes the proof.
\end{proof}
Let $(L,\mu,\alpha)$ be an Hom-Lie algebra, then $(L^*,\mu^*,\alpha)$ is an Hom-Lie coalgebras by the following relation
\begin{center}
	$<\mu^*(\eta),x \otimes y>=<\eta,\mu(x \otimes y)>$, for $\eta \in L^*$ and $x,y \in L$.
\end{center}  (see \cite{Y1} for more details).
Moreover, let $(L,\Delta,\alpha)$ be an Hom-Lie coalgebra, then $(L^*,\Delta^*,\alpha)$ is an Hom-Lie algebra with :
\begin{center}
	$<\Delta^*(\eta_1 \otimes \eta_2),x>=<\eta_1 \otimes \eta_2,\Delta(x)>$, for $\eta_1,\eta_2 \in L^*$ and $x \in L$
\end{center}
The previous two relations lead us to the next result 
\begin{prop} \label{prop2.7}
	Let $(L,\Delta,\varphi_L,\alpha)$ be a \textbf{Hom-Lie CoDer pair}, so $(L^*,\Delta^*,\varphi_L^*,\alpha)$ is a \textbf{Hom-Lie Der pair}.
\end{prop}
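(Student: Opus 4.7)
The plan is to take the $\alpha$-coderivation identity satisfied by $\varphi_L$, namely
\[
\Delta \circ \varphi_L = (\varphi_L \otimes \alpha) \circ \Delta + (\alpha \otimes \varphi_L) \circ \Delta,
\]
and dualize it term-by-term to obtain the $\alpha$-derivation identity for $\varphi_L^{*}$ with respect to the bracket $\Delta^{*}$. The Hom-Lie algebra structure of $(L^{*},\Delta^{*},\alpha)$ is already known from the discussion preceding the proposition, so the only thing left to check is that $\varphi_L^{*}\in \mathrm{Der}_{\alpha}(L^{*})$, i.e.
\[
\varphi_L^{*}\bigl(\Delta^{*}(\eta_1\otimes\eta_2)\bigr)=\Delta^{*}\bigl(\varphi_L^{*}(\eta_1)\otimes\alpha(\eta_2)\bigr)+\Delta^{*}\bigl(\alpha(\eta_1)\otimes\varphi_L^{*}(\eta_2)\bigr)
\]
for all $\eta_1,\eta_2\in L^{*}$.

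To verify this, I would pair both sides with an arbitrary $x\in L$ and use only the defining relations $\langle \varphi_L^{*}(\eta),x\rangle=\langle\eta,\varphi_L(x)\rangle$ and $\langle \Delta^{*}(\eta_1\otimes\eta_2),x\rangle=\langle \eta_1\otimes\eta_2,\Delta(x)\rangle$ recalled just above Proposition 2.7. On the left hand side this gives
\[
\langle \varphi_L^{*}(\Delta^{*}(\eta_1\otimes\eta_2)),x\rangle=\langle \eta_1\otimes\eta_2,\Delta(\varphi_L(x))\rangle,
\]
and then applying the $\alpha$-coderivation identity splits this into two pieces of the form $\langle \eta_1\otimes\eta_2,(\varphi_L\otimes\alpha)\Delta(x)\rangle$ and $\langle \eta_1\otimes\eta_2,(\alpha\otimes\varphi_L)\Delta(x)\rangle$. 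Distributing the dual across the tensor factors converts each piece into $\langle \varphi_L^{*}(\eta_1)\otimes\alpha(\eta_2),\Delta(x)\rangle$ and $\langle \alpha(\eta_1)\otimes\varphi_L^{*}(\eta_2),\Delta(x)\rangle$, respectively, which are exactly the pairings of $\Delta^{*}(\varphi_L^{*}(\eta_1)\otimes\alpha(\eta_2))+\Delta^{*}(\alpha(\eta_1)\otimes\varphi_L^{*}(\eta_2))$ with $x$. Since $x$ is arbitrary, the desired identity follows.

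There is no conceptual obstacle; the work is essentially bookkeeping with adjoints, and the one subtle point is the abuse of notation by which the dual map $\alpha^{*}$ is written again as $\alpha$, consistent with the convention the paper has already adopted in the preceding paragraph. I would close the proof by noting that compatibility with multiplicativity ($\alpha$ commuting with $\varphi_L$) transfers to $L^{*}$ by the same pairing argument, so $(L^{*},\Delta^{*},\varphi_L^{*},\alpha)$ satisfies all the axioms of a multiplicative Hom-LieDer pair.
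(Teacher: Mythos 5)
Your proposal is correct and matches the paper's own proof essentially step for step: both argue by pairing with an arbitrary $x\in L$, invoking the $\alpha$-coderivation identity for $\varphi_L$, and transposing each tensor factor through the duality to land on $\Delta^{*}\circ(\varphi_L^{*}\otimes\alpha+\alpha\otimes\varphi_L^{*})$. The closing remark about multiplicativity is a harmless addition not present in the paper.
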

\begin{proof}
	We need just to prove that $\varphi_L^*$ is an $\alpha$-derivation.
	Assume that for $\eta_1,\eta_2 \in L^*$ and $x \in L$
	\begin{align*}
		<\varphi_L^* \circ \Delta^*(\eta_1 \otimes \eta_2),x>&=<\Delta^*(\eta_1 \otimes \eta_2),\varphi_L(x)> \\
		&=<\eta_1 \otimes \eta_2,\Delta \circ \varphi_L(x)> \\
		&=<\eta_1 \otimes \eta_2,(\alpha \otimes \varphi_L) \circ \Delta(x)+(\varphi_L \otimes  \alpha ) \circ \Delta(x)> \text{( because } \varphi_L \text{ is } \alpha \text{-coderivation)} \\
		&=<\eta_1 \otimes \eta_2,(\alpha \otimes \varphi_L) \circ \Delta(x)> + <\eta_1 \otimes \eta_2,(\varphi_L \otimes  \alpha ) \circ \Delta(x)> \\
		&=<(\alpha \otimes \varphi_L^*)(\eta_1 \otimes \eta_2),\Delta(x)>+<(\varphi_L^* \otimes \alpha )(\eta_1 \otimes \eta_2),\Delta(x)> \\
		&=<\Delta^* \circ (\alpha \otimes \varphi_L^*+\varphi_L^* \otimes \alpha)(\eta_1 \otimes \eta_2),x>.
	\end{align*}
Which mean $\varphi_L^*$ is an $\alpha$-derivation. This completes the proof.
\end{proof}
\begin{ex}
From propositions \eqref{prop2.7} and \eqref{prop2.6}, $(L_1 \oplus L_2,\Delta_{L_1 \oplus L_2},\varphi_{L_1 \oplus L_2},\alpha_{L_1 \oplus L_2})$ is a \textbf{Hom-Lie-CoDer pair}, then $(L_1^* \oplus L_2^*,\Delta_{L_1^* \oplus L_2^*},\varphi^*_{L_1^* \oplus L_2^*},\alpha_{L_1 \oplus L_2})$ is a \textbf{Hom-Lie Der pair}.	
\end{ex}
It is well known that there exists a natural injection $L^* \otimes L^* \rightarrow (L \otimes L)^*$, so we can write 
\begin{equation} \label{eq2.11}
	[f,g](l)=(f \otimes g)\Delta(l)=f(l_1)g(l_2);\text{ where } l,l_1,l_2 \in L.
\end{equation}
\begin{prop}
	Let $(L,\Delta,\varphi_L,\alpha)$ be an \textbf{Hom-Lie CoDer pair}, so using \eqref{eq2.11} we obtain  a \textbf{Hom-Lie-Der pair} with $(L^*,\Delta^*,\varphi_L^*,\alpha)$ 
	\begin{align*}
		\Delta^*(f,g)&=[f,g], \\
		\varphi_L^*:L^* \rightarrow L^* \ ; \ &\eta \mapsto \varphi_L^*(\eta)=\eta \circ \varphi_L,\\
		\alpha \circ \eta&=\eta \circ \alpha.
	\end{align*}
For $f,g,\eta \in L^*$.
\end{prop}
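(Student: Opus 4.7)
The statement is essentially a concrete reformulation of Proposition \ref{prop2.7}, with the induced dual bracket written explicitly via formula \eqref{eq2.11}. So my plan is to reduce everything to that formula, and check the two things that still need checking: that $(L^*, \Delta^*, \alpha^*)$ with $\alpha^*(\eta) = \eta \circ \alpha$ is a Hom-Lie algebra under the bracket $[f,g] = (f \otimes g) \circ \Delta$, and that $\varphi_L^*$ is an $\alpha^*$-derivation with respect to this bracket.

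First, I would establish the Hom-Lie algebra structure on $L^*$. For skew-symmetry, I evaluate on $l \in L$ and apply Sweedler notation: $[f,g](l) = f(l_1)g(l_2)$, and the Hom-Lie coalgebra axiom $\Delta = -\tau \circ \Delta$ immediately gives $[f,g](l) = -[g,f](l)$. For the Hom-Jacobi identity, I would evaluate the cyclic sum $\circlearrowleft_{f,g,h} [\alpha^*(f),[g,h]](l)$ and pair it against the co-Jacobi identity $(1+\xi+\xi^2)(\alpha \otimes \Delta)\Delta(l) = 0$; each of the three cyclic terms on the algebra side corresponds exactly to one term of the cyclic sum on the coalgebra side after recognising that $\alpha^*(f) \otimes g \otimes h$ paired with $(\alpha \otimes \Delta)\Delta(l)$ yields $f(\alpha(l_1)) g(l_{21}) h(l_{22})$. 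Compatibility $\alpha^*([f,g]) = [\alpha^*(f), \alpha^*(g)]$ follows from multiplicativity of $\alpha$ on $\Delta$.

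Next, the main new content: showing that $\varphi_L^* : \eta \mapsto \eta \circ \varphi_L$ is an $\alpha^*$-derivation. I evaluate
\begin{align*}
\varphi_L^*([f,g])(l) &= [f,g](\varphi_L(l)) = (f \otimes g)\, \Delta \circ \varphi_L(l) \\
&= (f \otimes g)\bigl((\varphi_L \otimes \alpha) \Delta(l) + (\alpha \otimes \varphi_L) \Delta(l)\bigr),
\end{align*}
using that $\varphi_L$ is an $\alpha$-coderivation. Rewriting $f \circ \varphi_L = \varphi_L^*(f)$ and $f \circ \alpha = \alpha^*(f)$, this equals $[\varphi_L^*(f), \alpha^*(g)](l) + [\alpha^*(f), \varphi_L^*(g)](l)$, which is precisely the $\alpha^*$-derivation identity. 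This is the step that uses the full coderivation hypothesis.

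I do not foresee a genuine obstacle here: the calculation is entirely mechanical once one commits to Sweedler notation and uses the evaluation pairing $\langle \cdot, \cdot \rangle$ from the proof of Proposition \ref{prop2.7}. The only minor care point is the notational overload in the statement: one must read ``$\alpha \circ \eta = \eta \circ \alpha$'' as defining $\alpha$'s action on $L^*$ as pullback (i.e.\ $\alpha^*$), and ``$\Delta^*(f,g)$'' as the dual bracket. Once these identifications are made, the proof is essentially a two-line computation of the type just sketched, invoking Proposition \ref{prop2.7} if one wants to shorten the Hom-Lie-algebra part.
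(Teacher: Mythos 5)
Your proposal is correct and follows essentially the same route as the paper: the paper likewise outsources the Hom-Lie algebra structure on $L^*$ to the cited literature and then verifies the $\alpha$-derivation identity for $\varphi_L^*$ by exactly the Sweedler-notation computation you give, namely $\varphi_L^*([f,g])(l) = (f \otimes g)\bigl((\varphi_L \otimes \alpha)\Delta(l) + (\alpha \otimes \varphi_L)\Delta(l)\bigr) = [\varphi_L^*(f),\alpha(g)](l) + [\alpha(f),\varphi_L^*(g)](l)$. Your extra sketch of skew-symmetry and the Hom-Jacobi identity on $L^*$ is a harmless elaboration of a step the paper simply cites.
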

\begin{proof}
	Following \cite{M2}, we just need to prove that $\varphi_L^*$ is an $\alpha$-coderivation.\\
	Let $l \in L$ and $f,g \in L^*$ such that $\Delta(l)=l_1 \otimes l_2$
	\begin{align*}
		\varphi_L^*[f,g](l)&=[f,g] \circ \varphi_L(l)\\
		&=(f \otimes g) \circ \Delta \circ \varphi_L(l) \\
		&=(f \otimes g) \big((\varphi_L \otimes \alpha) \circ \Delta(l)+(\alpha \otimes \varphi_L) \circ \Delta(l) \big) \\
		&=f(\varphi_L(l_1))g(\alpha(l_2))+f(\alpha(l_1))g(\varphi_L(l_2)) \\
		&=\varphi_L^*(f)(l_1) \alpha(g(l_2))+\alpha(f(l_1))\varphi_L^*(g)(l_2) \\
		&=[\varphi_L^*(f),\alpha(g)](l)+[\alpha(f),\varphi_L^*(g)](l).
	\end{align*} 
This completes the proof.
\end{proof}The following lemma is very essential to study the cohomology of \textbf{Hom-LieCoDer pair}. However, in this paper, we are not studying cohomology in detail but it will be useful for studying cohomology of \textbf{Hom-LieCoDer pair} in other papers. 
\begin{lem}
	Let $(M,\rho_{\beta},\varphi_M,\beta)$ be an \textbf{Hom-Lie CoDer} comodule of the \textbf{Hom-Lie CoDer pair} $(L,\Delta,\varphi_L,\alpha)$. Then $(M^*,\rho_{\beta}^*,\varphi_M^*,\beta)$ is an \textbf{Hom-LieDer} module of  $(L^*,\Delta^*,\varphi_L^*,\alpha)$ in such a way that the following equations hold
 \begin{eqnarray}\begin{aligned}\rho_{\beta}^*(f,\gamma):=&\rho_{\beta}^*(f)(\gamma)=f\cdot \gamma=-(\gamma \otimes f) \circ \rho_{\beta}\label{2.12}\\ \varphi_L^*(f)=& f \circ \varphi_L, \label{eq2.13} \\
	\varphi_M^*(\gamma)=& \gamma \circ \varphi_M. \label{eq2.14}
	\end{aligned}
\end{eqnarray} 
For $f \in L^*, \gamma \in M^*.$
\end{lem}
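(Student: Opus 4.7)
The strategy splits into two verifications. First, one checks that $(M^*,\rho_\beta^*,\beta)$ is a Hom-Lie module over the Hom-Lie algebra $(L^*,\Delta^*,\alpha)$ associated with the Hom-Lie coalgebra $(L,\Delta,\alpha)$. This amounts to dualizing the two axioms of Definition~\ref{def1.5}: I would pair both sides of each axiom with arbitrary $\gamma\in M^*$ and $f,g\in L^*$ using the natural injection $L^*\otimes L^*\hookrightarrow (L\otimes L)^*$ together with formula (2.12), after which the comodule conditions translate term-by-term into the module conditions for $\rho_\beta^*$. This part is standard and follows the same pattern as Proposition~\ref{prop2.7}.

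The essential new content is the Hom-LieDer compatibility between $\varphi_M^*$ and the dualized action, that is, the analogue of (2.10):
\[
\varphi_M^*\circ\rho_\beta^*(f)-\rho_\beta^*(\alpha(f))\circ\varphi_M^*=\rho_\beta^*(\varphi_L^*(f))\circ\beta,\qquad f\in L^*,
\]
where the operator $A$ appearing in (2.10) is taken to be $\beta:M^*\to M^*,\ \gamma\mapsto\gamma\circ\beta$. To establish this, I evaluate both sides on an arbitrary $\gamma\in M^*$ at an arbitrary $m\in M$, writing $\rho_\beta(m)=m_0\otimes m_1$ in Sweedler notation. Using (2.12) and the definition of $\varphi_M^*$, the left-hand side rewrites as $-(\gamma\otimes f)(\rho_\beta(\varphi_M(m)))$. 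I then apply the coderivation identity (2.2) to replace $\rho_\beta(\varphi_M(m))$ by $(\varphi_M\otimes\alpha)\rho_\beta(m)+(\beta\otimes\varphi_L)\rho_\beta(m)$ and split the sum. Using the definitions $\varphi_M^*(\gamma)=\gamma\circ\varphi_M$, $\varphi_L^*(f)=f\circ\varphi_L$, $\alpha(f)=f\circ\alpha$, and $\beta(\gamma)=\gamma\circ\beta$, the two resulting terms are recognized as $\rho_\beta^*(\alpha(f))(\varphi_M^*(\gamma))(m)$ and $\rho_\beta^*(\varphi_L^*(f))(\beta(\gamma))(m)$ respectively, yielding the desired identity.

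The main obstacle is not conceptual but notational: one must keep track of the minus sign built into (2.12), consistently interpret $\alpha$ and $\beta$ on dual spaces as precomposition, and not confuse the two summands produced by (2.2). A preliminary sanity check that these conventions are compatible with the preceding proposition (which already established the $\alpha$-derivation property of $\varphi_L^*$) removes any ambiguity. Once the bookkeeping is fixed, the proof reduces to matching Sweedler components on the two sides of the equation, which is immediate.
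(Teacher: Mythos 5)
Your proposal is correct and follows essentially the same route as the paper: first dualize the two comodule axioms of Definition \ref{def1.5} to get the $L^*$-module structure on $M^*$, then evaluate $\varphi_M^*\circ\rho_\beta^*(f)(\gamma)$ on $m\in M$, apply the coderivation identity \eqref{eq2.2} to $\rho_\beta\circ\varphi_M$, and read off the two resulting terms as $\rho_\beta^*(\alpha(f))\varphi_M^*(\gamma)$ and $\rho_\beta^*(\varphi_L^*(f))\beta(\gamma)$, which is exactly the compatibility \eqref{eq2.10} with $A=\beta$. The sign bookkeeping you flag is handled identically in the paper's computation.
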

\begin{proof}
	Let $f,g \in L^*$ , $\gamma \in M^*$ and $m \in M$, we have $(M,\rho_{\beta},\beta)$ is an Hom-Lie comodule of $(L,\Delta,\alpha)$ which mean $\rho_{\beta} \circ \beta = (\alpha \otimes \beta) \circ \rho_{\beta}$. \\
	Let us begin by proving that $(M^*,\rho_{\beta}^*,\beta)$ is a $L^*$-module and then we prove that $(M^*,\rho_{\beta}^*,\varphi_M^*,\beta)$ is $L^*$-Hom-Lie-Der module. In the first hand, we prove that
	\begin{align*}
		<\beta \circ \rho_{\beta}^*(f,\gamma),m>&=<(f,\gamma),\rho_{\beta} \circ \beta(m)> \\
		&=<(f,\gamma),(\alpha \otimes \beta) \circ \rho_{\beta}(m)> \\
		&=<\rho_{\beta}^* \circ (\alpha \otimes \beta)(f,\gamma),m > \\
		&=<\rho_{\beta}^*(\alpha(f),\beta(\gamma)),m>
	\end{align*}
For the second hand 
\begin{align*}
	<\rho_{\beta}^*(\Delta^*(f,g),\beta(\gamma)),m>&=<\rho_{\beta}^*(\Delta^* \otimes \beta)((f,g),\gamma),m> \\
	&=<((f,g),\gamma),(\Delta \otimes \beta) \circ \rho_{\beta} (m)> \\
	&=<\rho_{\beta}^*(\alpha \otimes \rho_{\beta}^*)((f,g),\gamma),m>-<\rho_{\beta}^*(\alpha \otimes \rho_{\beta}^*)(\tau^* \otimes id_{M^*})((f,g),\gamma),m> \\
	&=<\rho_{\beta}^*(\alpha(f),\rho_{\beta}^*(g,\gamma)),m>-<\rho_{\beta}^*(\alpha(g),\rho_{\beta}^*(f,\gamma)),m>
\end{align*}
with the previous two results we obtain that $(M^*,\rho_{\beta}^*,\beta)$ is a $L^*$-module. \\
The second part of the proof we have 
\begin{align*}
	\varphi_M^* \circ \rho_{\beta}^*(f)(\gamma)&=\rho_{\beta}^*(f)(\gamma) \circ \varphi_M \\
	&=-(\gamma \otimes f) \circ (\rho_{\beta} \circ \varphi_M) \\
	&\text{ by using \eqref{eq2.2} we obtain } \\
	&=-(\gamma \otimes f) \circ (\varphi_M \otimes \alpha) \circ \rho_{\beta}-(\gamma \otimes f) \circ (\beta \otimes \varphi_L) \circ \rho_{\beta} \\
	&=-((\gamma \circ \varphi_M) \otimes (f \circ \alpha)) \circ \rho_{\beta} - ((\gamma \circ \beta) \otimes f \circ \varphi_L) \circ \rho_{\beta} \\
	&=-(\varphi_M^*(\gamma) \otimes \alpha(f)) \circ \rho_{\beta} - (\beta(\gamma) \otimes \varphi_L^*(f)) \circ \rho_{\beta} \\
	&= \rho_{\beta}^*(\varphi_L^*(f)) \beta(\gamma) + \rho_{\beta}^*(\alpha(f)) \varphi_M^*(\gamma).
\end{align*}
This completes the proof.
\end{proof}
\section{Rota-Baxter operators on Hom-Lie-CoDer pairs} \label{sec4}
The Rota-Baxter operator was first  introduced by G. Baxter to solve many analytic and combinatorial problems which applied to many fields in mathematics and mathematical physics. At that time, most of the study on these operators was on the associative algebras and then it was extended to the case of Lie algebras and  pre-Lie algebras. Later on, these operators are studied with reference to dual algebra. In this section, we extend this relation to the case of \textbf{Hom-Lie CoDer pairs} and \textbf{pre-Hom-Lie CoDer pair}.
\begin{defn} \label{def3.1}
	A Hom-pre-Lie coalgebra is a triple $(L,\Delta_0,\alpha)$ such the following identity holds
	\begin{equation}
		(1_{L^{\otimes^3}}-\tau^{12})((\Delta_0 \otimes \alpha) \circ \Delta_0-(\alpha \otimes \Delta_0) \circ \Delta_0)=0
	\end{equation}
Here $1_{L^{\otimes^3}}$ is the identity map on $L^{\otimes^3}$
\end{defn}
\begin{defn} \label{def3.2}
	A \textbf{Hom-pre-Lie CoDer pair} is a quadruple $(L,\Delta_0,\varphi_L,\alpha)$ where $(L,\Delta_0,\alpha)$ is a Hom-pre-Lie coalgebra and $\varphi_L$ is an $\alpha$-coderivation such that 
	\begin{center}
		$\Delta_0 \circ \varphi_L=(\varphi_L \otimes \alpha + \alpha \otimes \varphi_L) \circ \Delta_0$.
	\end{center}
\end{defn}
In the next theorem, we show that the Hom-pre-Lie CoDer pair can be connected to the Hom-Lie CoDer pair by using the commutator relation.
\begin{thm} \label{thm3.1} We can generate a Hom-Lie CoDer pair $(L,\Delta,\varphi_L,\alpha)$ from 
	a \textbf{Hom-pre-Lie CoDer pair} $(L,\Delta_0,\varphi_L,\alpha)$,by using the  the co-bracket $\Delta=\Delta_0-\tau \circ \Delta_0$ .           
\end{thm}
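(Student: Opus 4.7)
The statement splits into two independent verifications: (i) that $(L,\Delta,\alpha)$ with $\Delta=\Delta_0-\tau\circ\Delta_0$ is a Hom-Lie coalgebra in the sense of Definition~\ref{def1.3}, and (ii) that the same map $\varphi_L$ remains an $\alpha$-coderivation with respect to the new co-bracket $\Delta$. I would tackle (ii) first because it is essentially formal, and then devote the bulk of the argument to (i).

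For (ii), I would simply compute
\begin{align*}
\Delta\circ\varphi_L &= \Delta_0\circ\varphi_L - \tau\circ\Delta_0\circ\varphi_L \\
&= (\varphi_L\otimes\alpha+\alpha\otimes\varphi_L)\circ\Delta_0 - \tau\circ(\varphi_L\otimes\alpha+\alpha\otimes\varphi_L)\circ\Delta_0,
\end{align*}
using Definition~\ref{def3.2}. The elementary identities $\tau\circ(\varphi_L\otimes\alpha)=(\alpha\otimes\varphi_L)\circ\tau$ and $\tau\circ(\alpha\otimes\varphi_L)=(\varphi_L\otimes\alpha)\circ\tau$ then let me pull $\tau$ past the sum, giving exactly $(\varphi_L\otimes\alpha+\alpha\otimes\varphi_L)\circ(\Delta_0-\tau\circ\Delta_0)=(\varphi_L\otimes\alpha+\alpha\otimes\varphi_L)\circ\Delta$, as required.

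For (i), skew-symmetry $\tau\circ\Delta=-\Delta$ is immediate from $\tau^2=\mathrm{id}$. The real work is the Hom-Jacobi identity $(1+\xi+\xi^2)\circ(\alpha\otimes\Delta)\circ\Delta=0$. The plan is to substitute $\Delta=\Delta_0-\tau\circ\Delta_0$ on both occurrences and expand, obtaining four terms of the shape $(\alpha\otimes\Delta_0)\circ\Delta_0$, $(\alpha\otimes\tau\Delta_0)\circ\Delta_0$, $(\alpha\otimes\Delta_0)\circ\tau\Delta_0$, $(\alpha\otimes\tau\Delta_0)\circ\tau\Delta_0$. Since $\xi$ is a cyclic shift and $\tau^{12}$ swaps the first two tensor slots, each of these four terms can be rewritten, after the cyclic symmetrisation $1+\xi+\xi^2$, as a combination of the two canonical terms $(\alpha\otimes\Delta_0)\circ\Delta_0$ and $(\Delta_0\otimes\alpha)\circ\Delta_0$ precomposed with appropriate permutations. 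The Hom-pre-Lie coalgebra identity of Definition~\ref{def3.1}, i.e.\ that $(\Delta_0\otimes\alpha)\Delta_0-(\alpha\otimes\Delta_0)\Delta_0$ is $\tau^{12}$-invariant, is exactly what is needed to force the resulting twelve terms to cancel in pairs.

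The main obstacle is purely combinatorial: one has to track how $\xi$, $\xi^2$, $\tau$, and $\tau^{12}$ interact on $L^{\otimes 3}$ and bundle the twelve terms into groups on which the pre-Lie identity applies. A clean way to organise this is to introduce the notation $A=(\alpha\otimes\Delta_0)\Delta_0$ and $B=(\Delta_0\otimes\alpha)\Delta_0$, rewrite every one of the four expanded terms as $\sigma_i\circ A$ or $\sigma_i\circ B$ for an explicit permutation $\sigma_i$, and then observe that the cyclic sum collects them into three copies of $(1-\tau^{12})(A-B)$, which vanishes by Definition~\ref{def3.1}. No deeper idea is required; the argument is the dual of the standard fact that a Hom-pre-Lie algebra yields a Hom-Lie algebra via the commutator.
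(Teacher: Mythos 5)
Your proposal is correct and follows the same route the paper intends: the coderivation property is checked by pulling $\tau$ past $\varphi_L\otimes\alpha+\alpha\otimes\varphi_L$, and the Hom-Lie coalgebra axioms follow by dualizing the standard pre-Lie-to-Lie commutator argument. The paper itself only gives a one-line proof, citing Lemma 2.2.1 of Turaev for the coalgebra part and declaring the coderivation check easy, so your write-up supplies exactly the details the paper omits without deviating from its approach.
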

\begin{proof} Similar to the proof of Lemma 2.2.1 in \cite{T1}, it is easy to prove that $\varphi_L$ is an $\alpha$-coderivation.
\end{proof}
We can describe the previous lemma in the following diagram
\begin{center}
	$\xymatrix{
		\textbf{\emph{Hom-pre-Lie CoDer pair}}&\underrightarrow{commutator} &\textbf{\emph{Hom-Lie CoDer pair}} }$
\end{center}
Next, we prove that every Rota-Baxter \textbf{Hom-CoAss-CoDer pair} of weight $(\lambda=-1)$ gives a \textbf{Hom-pre-Lie CoDer pair}. For this, we first introduce the following definitions.
\begin{defn}
	A $5$-tuple $(L,\Delta,\varphi_L,\alpha,R)$ is said to be a Rota-Baxter \textbf{Hom-CoDer pair} of weight $\lambda$ if $(L,\Delta,\varphi_L,\alpha)$ is a \textbf{Hom-CoDer pair} and $R$ is a Rota-Baxter operator of weight  $\lambda$, verifying the equation \eqref{eq0.1}, and the following condition
	\begin{equation} \label{eq3.2}
		\varphi_L \circ R=R \circ \varphi_L
	\end{equation}
\end{defn} 
\begin{thm} \label{thm3.2}
	Let $(L,\Delta,\varphi_L,\alpha,R)$ be a Rota-Baxter \textbf{Hom-CoAss CoDer pair} of weight $(\lambda=-1)$. We define $\tilde{\Delta}$ on $L$ by 
	\begin{equation*}
	\tilde{\Delta}=(R \otimes 1_L) \Delta - \tau \circ (1_L \otimes R)\Delta-\Delta,	
	\end{equation*}
that implies:
	\begin{equation*}
	\tilde{\Delta}(l)=R(l_1) \otimes l_2 - R(l_2) \otimes l_1 - l_1 \otimes l_2.	
	\end{equation*} for  all $l \in L$.
Then $(L,\tilde{\Delta},\varphi_L,\alpha)$ is a \textbf{Hom-pre-Lie CoDer pair}.
With \begin{equation} \label{eq3.3}
	R \circ \alpha=\alpha \circ R,
\end{equation}
and the equation \eqref{eq3.2} hold.	
\end{thm}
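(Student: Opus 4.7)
The plan is to verify the two defining properties of a Hom-pre-Lie CoDer pair for $(L,\tilde{\Delta},\varphi_L,\alpha)$ separately: first, that $(L,\tilde{\Delta},\alpha)$ is a Hom-pre-Lie coalgebra in the sense of Definition \ref{def3.1}, and second, that $\varphi_L$ is still an $\alpha$-coderivation with respect to the new cobracket $\tilde{\Delta}$.

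For the first property, I would expand each occurrence of $\tilde\Delta$ in $(\tilde\Delta \otimes \alpha)\circ\tilde\Delta$ and $(\alpha \otimes \tilde\Delta)\circ\tilde\Delta$ using the definition $\tilde\Delta=(R\otimes 1_L)\Delta-\tau\circ(1_L\otimes R)\Delta-\Delta$. This produces nine summands on each side. The essential tools for simplification are (i) Hom-coassociativity $(\alpha\otimes\Delta)\circ\Delta=(\Delta\otimes\alpha)\circ\Delta$, which lets one commute $\alpha$ past $\Delta$ inside iterated coproducts; (ii) the Rota-Baxter identity \eqref{eq0.1} of weight $-1$, which rewrites each term containing two $R$'s on the same coproduct factor as a combination of terms in which one $R$ is pulled outside; and (iii) the compatibility $R\circ\alpha=\alpha\circ R$ from \eqref{eq3.3}, which makes the reductions compatible with the structural map. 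After these substitutions, I would apply $(1_{L^{\otimes 3}}-\tau^{12})$ and observe that the terms symmetric in the first two tensor slots cancel in pairs, so the required identity drops out. The argument runs parallel to the classical (non-Hom) result that a Rota-Baxter coassociative coalgebra of weight $-1$ yields a pre-Lie coalgebra; the $\alpha$-twists are absorbed uniformly by Hom-coassociativity and $R\alpha=\alpha R$.

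For the second property, I would compute $\tilde\Delta\circ\varphi_L$ and expand:
\begin{align*}
\tilde\Delta\circ\varphi_L &= (R\otimes 1_L)\Delta\varphi_L - \tau\circ(1_L\otimes R)\Delta\varphi_L - \Delta\varphi_L.
\end{align*}
Using the coderivation identity $\Delta\circ\varphi_L=(\varphi_L\otimes\alpha+\alpha\otimes\varphi_L)\circ\Delta$ together with $\varphi_L\circ R=R\circ\varphi_L$ from \eqref{eq3.2} and $R\circ\alpha=\alpha\circ R$, each of the three terms splits into two pieces in which $\varphi_L$ sits on one of the tensor factors. The intertwining of $\tau$ with $\varphi_L\otimes\alpha$ (namely $\tau\circ(\varphi_L\otimes\alpha)=(\alpha\otimes\varphi_L)\circ\tau$) converts the middle term correctly so that the six resulting summands reassemble as $(\varphi_L\otimes\alpha+\alpha\otimes\varphi_L)\circ\tilde\Delta$.

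The main obstacle is the bookkeeping in the first step: nine terms on each side, each further expanded via the weight $-1$ Rota-Baxter relation, must be matched and cancelled after antisymmetrization by $\tau^{12}$. The correct strategy is to group the nine summands by how many $R$'s appear and on which legs, apply Hom-coassociativity within each group to move $\alpha$ to a uniform position, and only then invoke \eqref{eq0.1} to clear the double-$R$ terms. Once the non-Hom analogue is written out, the Hom-version requires only the verification that $R\circ\alpha=\alpha\circ R$ and $\varphi_L\circ R=R\circ\varphi_L$ are used at each place where $\alpha$ or $\varphi_L$ must pass through $R$.
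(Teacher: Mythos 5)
Your proposal is correct and follows essentially the same route as the paper: expand the eighteen summands of $(\tilde\Delta\otimes\alpha)\tilde\Delta-(\alpha\otimes\tilde\Delta)\tilde\Delta$ in Sweedler notation, antisymmetrize with $\tau^{12}$, clear the double-$R$ terms with the weight $-1$ Rota--Baxter identity together with $R\circ\alpha=\alpha\circ R$, and match the remaining terms via Hom-coassociativity; the coderivation part of your argument is exactly the content of the paper's Proposition \ref{prop3.3} combined with the flip identity $\tau\circ(\varphi_L\otimes\alpha)=(\alpha\otimes\varphi_L)\circ\tau$. No substantive difference in method.
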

To prove the above-stated theorem, we  use the results in the following proposition.
\begin{prop} \label{prop3.3}
	Let be $(L,\Delta,\varphi_L,\alpha)$ a \textbf{Hom-CoAss CoDer pair} and $R : L \rightarrow L$ a Rota-Baxter operator. Then along with equations \eqref{eq3.2} and \eqref{eq3.3} the following two identities holds 
	\begin{align*}
		(R \otimes 1_L)(\alpha \otimes \varphi_L) \Delta(l)&=(\alpha \otimes \varphi_L)(R \otimes 1_L)\Delta(l), \\
		\tau \circ (1_L \otimes R)(\alpha \otimes \varphi_L) \Delta(l)&=(\varphi_L \otimes \alpha)(\tau \circ (1_L \otimes R))\Delta(l).
	\end{align*}
\end{prop}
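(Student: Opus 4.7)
The plan is to verify both identities by a direct calculation in Sweedler notation, using only the two commuting relations \eqref{eq3.2} and \eqref{eq3.3}; no use of the Rota--Baxter identity itself is needed, nor is the co-associativity condition. Writing $\Delta(l)=l_1\otimes l_2$, the argument reduces to pushing $R$, $\alpha$, and $\varphi_L$ past one another on individual tensor factors, which commute by hypothesis.

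For the first identity I would compute the left-hand side as
\begin{align*}
(R\otimes 1_L)(\alpha\otimes \varphi_L)\Delta(l)
&=(R\otimes 1_L)\bigl(\alpha(l_1)\otimes \varphi_L(l_2)\bigr)\\
&=R(\alpha(l_1))\otimes \varphi_L(l_2),
\end{align*}
and then use $R\circ\alpha=\alpha\circ R$ from \eqref{eq3.3} to replace $R(\alpha(l_1))$ by $\alpha(R(l_1))$. The right-hand side expands to $\alpha(R(l_1))\otimes \varphi_L(l_2)$ directly from the definitions, so the two expressions coincide.

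For the second identity I would compute
\begin{align*}
\tau\circ(1_L\otimes R)(\alpha\otimes \varphi_L)\Delta(l)
&=\tau\bigl(\alpha(l_1)\otimes R(\varphi_L(l_2))\bigr)\\
&=R(\varphi_L(l_2))\otimes \alpha(l_1),
\end{align*}
and then use $\varphi_L\circ R=R\circ \varphi_L$ from \eqref{eq3.2} to rewrite this as $\varphi_L(R(l_2))\otimes \alpha(l_1)$. The right-hand side unfolds to $(\varphi_L\otimes \alpha)\bigl(R(l_2)\otimes l_1\bigr)=\varphi_L(R(l_2))\otimes \alpha(l_1)$, which matches.

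There is no real obstacle here: the entire content of the proposition is the observation that $R$ commutes with both $\alpha$ and $\varphi_L$ on each tensor factor, so the only place one has to be careful is the order of the flip $\tau$ and the maps applied to the individual slots when reading off what each composition produces in Sweedler notation.
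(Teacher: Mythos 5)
Your proposal is correct and is essentially identical to the paper's own proof: both expand $\Delta(l)=l_1\otimes l_2$ in Sweedler notation and move $R$ past $\alpha$ via \eqref{eq3.3} for the first identity and past $\varphi_L$ via \eqref{eq3.2} for the second. Your observation that neither the Rota--Baxter identity nor coassociativity is needed is accurate.
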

\begin{proof}
	Consider that  $l \in L$,
	\begin{align*}
		(R \otimes 1_L)(\alpha \otimes \varphi_L) \Delta(l)
        &=(R \otimes 1_L)(\alpha \otimes \varphi_L)(l_1 \otimes l_2) \\
		&=(R \otimes 1_L)(\alpha(l_1) \otimes \varphi_L(l_2)) \\
		&=R(\alpha(l_1)) \otimes \varphi_L(l_2) \\
		&\overset{\eqref{eq3.3}}{=} \alpha(R(l_1)) \otimes \varphi_L(l_2) \\
		&=(\alpha \otimes \varphi_L)(R \otimes 1_L)\Delta(l).
	\end{align*}
\begin{align*}
	\tau \circ (1_L \otimes R)(\alpha \otimes \varphi_L) \Delta(l)&=\tau \circ (1_L \otimes R)(\alpha \otimes \varphi_L)(l_1 \otimes l_2) \\
	&=\tau \circ (1_L \otimes R)(\alpha(l_1) \otimes \varphi_L(l_2)) \\
	&=R(\varphi_L(l_2)) \otimes \alpha(l_1) \\
	&\overset{\eqref{eq3.2}}{=}\varphi_L(R(l_2)) \otimes \alpha(l_1) \\
	&=(\varphi_L \otimes \alpha)(R(l_2) \otimes 1_L(l_1)) \\
	&=(\varphi_L \otimes \alpha)(\tau \circ (1_L \otimes R))(l_1 \otimes l_2) \\
	&=(\varphi_L \otimes \alpha)(\tau \circ (1_L \otimes R))\Delta(l).
\end{align*}
\end{proof}
The proof of theorem \eqref{thm3.2} is given as follows:
\begin{proof}
	We need to prove that $\tilde{\Delta}_L - \tau^{12} \tilde{\Delta}_L=0$ \\
	where $\tilde{\Delta}_L=(\tilde{\Delta} \otimes \alpha)\tilde{\Delta}-(\alpha \otimes \tilde{\Delta}) \tilde{\Delta}$ and $\tau^{12}$ as defined above. We begin by computing $\tilde{\Delta}_L(l)$ with $l \in L$
	\begin{align*}
		\tilde{\Delta}(l)&=R(R(l_1)_1) \otimes R(l_1)_2 \otimes \alpha(l_2)-R(R(l_1)_2) \otimes R(l_1)_1 \otimes \alpha(l_2)-R(l_1)_1\otimes R(l_1)_2 \otimes \alpha(l_2) \\
		&-R(R(l_2)_1) \otimes R(l_2)_2 \otimes \alpha(l_1)+R(R(l_2)_2) \otimes R(l_2)_1 \otimes \alpha(l_1)+R(l_2)_1\otimes R(l_2)_2 \otimes \alpha(l_1) \\
		&-R(l_{11}) \otimes l_{12} \otimes \alpha(l_2)+R(l_{12}) \otimes l_{11} \otimes \alpha(l_2)+l_{11} \otimes l_{12} \otimes \alpha(l_2)-\alpha(R(l_1)) \otimes R(l_{21}) \otimes l_{22} \\
		&+\alpha(R(l_1)) \otimes R(l_{22}) \otimes l_{21}+\alpha(R(l_1)) \otimes l_{21} \otimes l_{22}+\alpha(R(l_2)) \otimes R(l_{11}) \otimes l_{12}-\alpha(R(l_2)) \otimes R(l_{12}) \otimes l_{11} \\	
		&-\alpha(R(l_2)) \otimes l_{11} \otimes l_{12} +\alpha(l_1) \otimes R(l_{21}) \otimes l_{22}- \alpha(l_1) \otimes R(l_{22}) \otimes l_{21}-\alpha(l_1) \otimes l_{21} \otimes l_{22} \\
		&=R(R(l_1)_1) \otimes R(l_1)_2 \otimes \alpha(l_2)-R(R(l_1)_2) \otimes R(l_1)_1 \otimes \alpha(l_2)-R(l_1)_1\otimes R(l_1)_2 \otimes \alpha(l_2) \\
		&-R(R(l_2)_1) \otimes R(l_2)_2 \otimes \alpha(l_1)+R(R(l_2)_2) \otimes R(l_2)_1 \otimes \alpha(l_1)+R(l_2)_1\otimes R(l_2)_2 \otimes \alpha(l_1) \\
		&+R(\alpha(l_2)) \otimes \alpha(l_1) \otimes \alpha(l_3)-\alpha(R(l_1)) \otimes R(\alpha(l_2)) \otimes \alpha(l_3) \\
		&+\alpha(R(l_1)) \otimes R(\alpha(l_3)) \otimes \alpha(l_2)+\alpha(R(l_3)) \otimes R(\alpha(l_1)) \otimes \alpha(l_2)-\alpha(R(l_3)) \otimes R(\alpha(l_2)) \otimes \alpha(l_1) \\
		&-\alpha(R(l_3)) \otimes \alpha(l_1) \otimes \alpha(l_2) +\alpha(l_1) \otimes R(\alpha(l_2)) \otimes \alpha(l_3)- \alpha(l_1) \otimes R(\alpha(l_3)) \otimes \alpha(l_2)\\
	\end{align*}
Now 
\begin{align*}
	(\tilde{\Delta}_L - \tau^{12} \tilde{\Delta}_L(l)&=R(R(l_1)_1) \otimes R(l_1)_2 \otimes \alpha(l_2)-R(R(l_1)_2) \otimes R(l_1)_1 \otimes \alpha(l_2)-R(l_1)_1\otimes R(l_1)_2 \otimes \alpha(l_2) \\
	&-R(R(l_2)_1) \otimes R(l_2)_2 \otimes \alpha(l_1)+R(R(l_2)_2) \otimes R(l_2)_1 \otimes \alpha(l_1)+R(l_2)_1\otimes R(l_2)_2 \otimes \alpha(l_1) \\
	&-\alpha(R(l_1)) \otimes R(\alpha(l_2)) \otimes \alpha(l_3)-\alpha R(l_3) \otimes R(\alpha(l_2)) \otimes \alpha(l_1) \\
	&- R(l_1)_2 \otimes R(R(l_1)_1) \otimes  \alpha(l_2)+R(l_1)_1 \otimes R(R(l_1)_2) \otimes  \alpha(l_2)+R(l_1)_2 \otimes R(l_1)_1\otimes  \alpha(l_2) \\
	&+R(l_2)_2 \otimes R(R(l_2)_1) \otimes  \alpha(l_1)-R(l_2)_1 \otimes R(R(l_2)_2) \otimes  \alpha(l_1)-R(l_2)_2 \otimes R(l_2)_1\otimes  \alpha(l_1) \\
	&+R(\alpha(l_2)) \otimes \alpha R(l_1) \otimes \alpha(l_3)+R(\alpha(l_2)) \otimes \alpha R(l_3) \otimes \alpha(l_1)
\end{align*}
By \eqref{eq0.1} and $(\alpha \circ R = R \circ \alpha)$ we obtain 
\begin{align*}
	(\Delta_L - \tau^{12} \Delta_L)(l)&= R(l_{11}) \otimes R(l_{12}) \otimes \alpha(l_2) - R(l_{12}) \otimes R(l_{11}) \otimes \alpha(l_2) - R(l_{21}) \otimes R(l_{22}) \otimes \alpha(l_1) \\
	&+ R(l_{22}) \otimes R(l_{21}) \otimes \alpha(l_1)- R(\alpha(l_1)) \otimes R(\alpha(l_2)) \otimes \alpha(l_3) +R(\alpha(l_2)) \otimes R(\alpha(l_1)) \otimes \alpha(l_3) \\
	&+ R(\alpha(l_2)) \otimes R(\alpha(l_3)) \otimes \alpha(l_1) - R(\alpha(l_3)) \otimes R(\alpha(l_2)) \otimes \alpha(l_1) \\
	&=0 
\end{align*}
Which mean $(L,\tilde{\Delta},\alpha)$ is a Hom-pre-Lie coalgebra. We need just to prove that $\varphi_L$ is an $\alpha$-coderivation.
\begin{align*}
	\tilde{\Delta} \circ \varphi_L&=\Big((R \otimes 1_L)\Delta-\tau \circ (1_L \otimes R)\Delta-\Delta \Big) \circ \varphi_L\\
	&=(R \otimes 1_L)(\Delta \circ \varphi_L)-\tau \circ (1_L \otimes R)(\Delta \circ \varphi_L)-(\Delta \circ \varphi_L) \\
	&=(R\otimes 1_L)(\varphi_L \otimes \alpha)\Delta-\tau \circ (1_L \otimes R)(\varphi_L \otimes \alpha)\Delta-(\varphi_L \otimes \alpha)\Delta \\
	&+(R \otimes 1_L)(\alpha \otimes \varphi_L)\Delta-\tau \circ (1_L \otimes R)(\alpha \otimes \varphi_L)\Delta-(\alpha \otimes \varphi_L)\Delta \\
	&\overset{\text{By using proposition \eqref{prop3.3}}}{=}(\varphi_L \otimes \alpha)(R \otimes 1)\Delta-(\varphi_L \otimes \alpha)(\tau \circ (1_L \otimes R)\Delta)-(\varphi_L \otimes \alpha)\Delta \\
	&+(\alpha \otimes \varphi_L)(R \otimes 1)\Delta-(\alpha \otimes \varphi_L)(\tau \circ (1_L \otimes R)\Delta)-(\alpha \otimes \varphi_L)\Delta \\
	&=(\varphi_L \otimes \alpha)	\tilde{\Delta} + (\alpha \otimes \varphi_L)	\tilde{\Delta}.
\end{align*}
This completes the proof. 
\end{proof}
\begin{re}
	Let $(L,\Delta,\alpha)$ be an Hom-associative coalgebra, than for $l \in L$, 
 \begin{align*}
     (\alpha \otimes \Delta) \Delta(l)&=(\Delta \otimes \alpha)\Delta(l) \\
     (\alpha \otimes \Delta)(l_1 \otimes l_2)&=(\Delta \otimes \alpha)(l_1 \otimes l_2) \\
     \alpha(l_1) \otimes l_{21} \otimes l_{22}&=l_{11} \otimes l_{12} \otimes \alpha(l_2)
 \end{align*}
 which mean $\alpha(l_1)=l_{11}, \ l_{21}=l_{12} \text{ and } \ l_{22}=\alpha(l_2)$. \\
 That is why in the previous proof we used the following notations
 \begin{equation*}
     l_{11}=\alpha(l_1) \text{ , } l_{12}=l_{21}=\alpha(l_2) \text{ and } \alpha(l_2)=l_{22}=\alpha(l_3).
 \end{equation*}
\end{re}
In the next theorem, we got the same result of theorem \eqref{thm3.2} but in the case of $\lambda=0$.
\begin{thm} \label{thm3.4}
	Let $(L,\Delta,\varphi_L,\alpha,R)$ be a Rota-Baxter Hom-CoAss-CoDer pair of weight $(\lambda=0)$. We define $\tilde{\Delta}$ on $L$ by 
	\begin{equation*}
		\tilde{\Delta}=(R \otimes 1_L) \Delta - \tau \circ (1_L \otimes R)\Delta,	
	\end{equation*}
	implies that for $l \in L$,
	\begin{equation*}
		\tilde{\Delta}(l)=R(l_1) \otimes l_2 - R(l_2) \otimes l_1.	
	\end{equation*}
	Then $(L,\tilde{\Delta},\varphi_L,\alpha)$ is a Hom-pre-Lie CoDer pair.
	With
	\begin{equation*} 
		R \circ \alpha=\alpha \circ R,
	\end{equation*}
	\begin{equation*}
		\varphi_L \circ R =R \circ \varphi_L.
	\end{equation*}	
\end{thm}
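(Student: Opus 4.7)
The plan is to mimic the proof of Theorem \ref{thm3.2}, noting that the case $\lambda = 0$ is strictly simpler because the correction term $-\Delta$ is dropped from the definition of $\tilde{\Delta}$ and the Rota-Baxter identity \eqref{eq0.1} simplifies to $R(l_1) \otimes R(l_2) = R(l)_1 \otimes R(R(l)_2) + R(R(l)_1) \otimes R(l)_2$. There are two things to check: first that $(L,\tilde{\Delta},\alpha)$ is a Hom-pre-Lie coalgebra in the sense of Definition \ref{def3.1}, and second that $\varphi_L$ is an $\alpha$-coderivation for $\tilde{\Delta}$.

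For the Hom-pre-Lie coalgebra axiom, I would set $\tilde{\Delta}_L := (\tilde{\Delta} \otimes \alpha)\tilde{\Delta} - (\alpha \otimes \tilde{\Delta})\tilde{\Delta}$ and aim to show $\tilde{\Delta}_L - \tau^{12}\tilde{\Delta}_L = 0$. Expanding each $\tilde{\Delta}$ as $R(l_1) \otimes l_2 - R(l_2) \otimes l_1$ produces eight terms in each of $(\tilde{\Delta}\otimes \alpha)\tilde{\Delta}(l)$ and $(\alpha\otimes\tilde{\Delta})\tilde{\Delta}(l)$. On the terms of the form $R(R(l_i)_1) \otimes R(l_i)_2 \otimes \alpha(l_j)$, I would apply the weight-zero Rota-Baxter identity to rewrite them in a form that pairs cleanly with the Hom-coassociativity identity $(\alpha \otimes \Delta)\Delta = (\Delta \otimes \alpha)\Delta$, together with $R \circ \alpha = \alpha \circ R$, to bring all expressions to a common three-tensor notation $l_1 \otimes l_2 \otimes l_3$. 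After applying $(1 - \tau^{12})$, the terms group into antisymmetric pairs like $R(l_{11})\otimes R(l_{12}) \otimes \alpha(l_2) - R(l_{12})\otimes R(l_{11})\otimes\alpha(l_2)$ which cancel against their counterparts from the $(\alpha\otimes\tilde{\Delta})\tilde{\Delta}$ branch after relabeling via Hom-coassociativity, yielding zero.

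For the coderivation axiom, I would compute
\begin{align*}
\tilde{\Delta} \circ \varphi_L &= (R \otimes 1_L)(\Delta \circ \varphi_L) - \tau \circ (1_L \otimes R)(\Delta \circ \varphi_L),
\end{align*}
substitute $\Delta \circ \varphi_L = (\varphi_L \otimes \alpha + \alpha \otimes \varphi_L)\Delta$ (since $\varphi_L \in CoDer_\alpha(L)$), and invoke Proposition \ref{prop3.3} to commute $R \otimes 1_L$ past $\alpha \otimes \varphi_L$ and $\tau \circ (1_L \otimes R)$ past $\alpha \otimes \varphi_L$. The identities $R \circ \alpha = \alpha \circ R$ and $\varphi_L \circ R = R \circ \varphi_L$ are exactly what make these commutations work. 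Collecting the resulting terms recovers $(\varphi_L \otimes \alpha)\tilde{\Delta} + (\alpha \otimes \varphi_L)\tilde{\Delta}$.

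The main obstacle is the bookkeeping in the first step: keeping track of the indices when iterating $\tilde{\Delta}$ and matching terms across the two branches after applying the Rota-Baxter and Hom-coassociativity identities. Since this mirrors the computation already carried out in Theorem \ref{thm3.2} but with fewer terms (no $-\Delta$ contribution), I would present it in parallel, emphasizing only the terms that differ from the $\lambda = -1$ case and pointing out that the linear $\Delta$-terms which needed cancellation in that proof are simply absent here.
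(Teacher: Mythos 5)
Your proposal is correct and follows essentially the same route as the paper, which proves Theorem \ref{thm3.4} simply by noting that the argument of Theorem \ref{thm3.2} carries over with the $-\Delta$ term and the weight-$\lambda$ term of the Rota--Baxter identity removed; your outline in fact supplies more detail than the paper does. (One trivial bookkeeping remark: each branch expands into four terms here rather than eight, since $\tilde{\Delta}$ now has only two summands.)
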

\begin{proof}
   The proof is similar to theorem \ref{thm3.2}.  
\end{proof}

With the previous two results we obtain the following diagram
\begin{center}
	$\xymatrix{
		\textbf{\emph{Hom-CoAss-CoDer pair}}&\underrightarrow{ \text{R.B.O of weight }\lambda=0,-1} &\textbf{\emph{Hom-pre-Lie CoDer pair}} }$
\end{center}
Authors in paper \cite{D1} studied coassociative coalgebras equipped with a coderivation $\delta_L$, which leads to defining \textbf{AssCoDer pair} $(L,\delta_L)$ in \cite{A1}. In this  paper, we study the \textbf{Hom-Ass CoDer pair} $(L,\delta_L,\alpha)$ and its relation with the \textbf{Hom-Lie CoDer pair}. \\
Hom-coassociative coalgebra is a quadruple $(L,\Delta,\delta_L,\alpha)$ where $L$ is a vector space, $\Delta : L \rightarrow L$ a linear map, $\delta_L: L \rightarrow L$ an $\alpha$-coderivation and $\alpha :L \rightarrow L$ the twisting map with 
\begin{center}
	$(\alpha \otimes \Delta) \Delta = (\Delta \otimes \alpha) \Delta$.
\end{center}
Let us start with the following definition 
\begin{defn}
	An \textbf{Hom-Ass CoDer pair} is a coassociative coalgebra $L$ with an $\alpha$-coderivation $\delta_L$, denoted by $(L,\Delta,\delta_L,\alpha)$ .
\end{defn}
\begin{defn}
	Let $(L,\Delta,\delta_L,\alpha)$ and $(K,\Delta,\delta_K,\alpha)$ be two \textbf{Hom-Ass CoDer pairs}. A \textbf{Hom-Ass CoDer} morphism $\psi$ from $L$ to $K$ is a coalgebra morphism $\psi:L \rightarrow k$ such that 
	\begin{center}
		$\delta_L \circ \psi =\psi \circ \delta_K \text{ and } \psi \circ \alpha=\beta \circ \psi$ .
	\end{center}
\end{defn}
\begin{defn}\cite{M4}
	Let $(L,\Delta,\delta_L)$ be an \textbf{Ass-CoDer pair}, the linear map $T: L \rightarrow L$ is said to be endomorphism operator if 
	\begin{equation}
		\Delta \circ T = (T \otimes T) \Delta.
	\end{equation}
\end{defn}
The previous definition can be extended to the case of Hom structure by adding the following condition 
\begin{equation} \label{eq3.5}
    T \circ \alpha = \alpha \circ T.
\end{equation}
Where $(L,\Delta,\delta_L,\alpha)$ be an \textbf{Hom-Ass CoDer pair}. \\
It is well known that any Hom-coassociative coalgebra becomes a Hom-Lie coalgebra via the commutator $\Delta_C=\Delta - \tau \circ \Delta$, the next proposition generalize that results in the case of \textbf{Hom-CoDer pairs}.
\begin{prop}
	Let $(L,\Delta,\delta_L,\alpha)$ be an \textbf{Hom-AssCoDer pair}. Then we can define an \textbf{Hom-Lie CoDer pair}$ (L,\Delta_c,\delta_L,\alpha)$ where :
	\begin{align*}
	\Delta_C=\Delta - \tau \circ \Delta	.
	\end{align*}
\end{prop}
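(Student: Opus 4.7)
The plan is to verify the two conditions that define a \textbf{Hom-Lie CoDer pair}: first, that $(L,\Delta_C,\alpha)$ satisfies the Hom-Lie coalgebra axioms from Definition \ref{def1.3}, and second, that $\delta_L$ remains an $\alpha$-coderivation with respect to the new cobracket $\Delta_C$. The first part is the content of the ``well-known'' fact mentioned just before the statement (the commutator construction for Hom-coassociative coalgebras), so the real content specific to the CoDer setting is the coderivation compatibility.

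For the Hom-Lie coalgebra structure, I would first dispatch antisymmetry in one line: $\tau \circ \Delta_C = \tau \circ \Delta - \tau \circ \tau \circ \Delta = \tau \circ \Delta - \Delta = -\Delta_C$, which gives $\mathrm{Im}(\Delta_C) \subset \mathrm{Im}(1-\tau)$. Next, for the Hom-Jacobi identity $(1+\xi+\xi^2)\circ(\alpha \otimes \Delta_C)\circ \Delta_C = 0$, I would expand $\Delta_C = \Delta - \tau \circ \Delta$ in both occurrences, producing four terms, and then rewrite each by means of Hom-coassociativity $(\alpha \otimes \Delta)\circ\Delta = (\Delta \otimes \alpha)\circ \Delta$ together with the naturality relations $\tau \circ (\alpha \otimes \Delta) = (\Delta \otimes \alpha)\circ\tau$ (applied on the appropriate factor). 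Applying the cyclic sum $1+\xi+\xi^2$ then produces pairs of terms that are identical up to a sign and cancel in a standard bookkeeping of twelve summands.

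For the coderivation compatibility, I would compute directly, using that $\delta_L$ is already an $\alpha$-coderivation with respect to $\Delta$ and the elementary identity $\tau \circ (f \otimes g) = (g \otimes f)\circ \tau$ for any linear maps $f,g$:
\begin{align*}
\Delta_C \circ \delta_L &= \Delta \circ \delta_L - \tau \circ \Delta \circ \delta_L \\
&= (\delta_L \otimes \alpha + \alpha \otimes \delta_L)\circ \Delta - \tau \circ (\delta_L \otimes \alpha + \alpha \otimes \delta_L)\circ \Delta \\
&= (\delta_L \otimes \alpha + \alpha \otimes \delta_L)\circ \Delta - (\alpha \otimes \delta_L + \delta_L \otimes \alpha)\circ \tau \circ \Delta \\
&= (\delta_L \otimes \alpha + \alpha \otimes \delta_L)\circ (\Delta - \tau \circ \Delta) \\
&= (\delta_L \otimes \alpha + \alpha \otimes \delta_L)\circ \Delta_C,
\end{align*}
which is exactly the $\alpha$-coderivation condition for $\Delta_C$.

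The main obstacle is the bookkeeping in the Hom-Jacobi verification: keeping track of the twelve terms produced by $(1+\xi+\xi^2)$ applied to the four summands of $(\alpha \otimes \Delta_C)\Delta_C$ and recognizing which pairs cancel after using Hom-coassociativity. The coderivation step, by contrast, is a short formal manipulation once one observes that $\tau$ intertwines $\delta_L \otimes \alpha$ with $\alpha \otimes \delta_L$.
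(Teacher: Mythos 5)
Your proposal is correct and its essential step — the verification that $\delta_L$ is an $\alpha$-coderivation for $\Delta_C$ via the intertwining identity $\tau\circ(\delta_L\otimes\alpha)=(\alpha\otimes\delta_L)\circ\tau$ — is exactly the computation the paper gives; the paper likewise treats the Hom-Lie coalgebra structure of the commutator as the known part and only proves the coderivation compatibility. Your additional sketch of the antisymmetry and Hom-Jacobi verification is fine but not part of the paper's argument.
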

\begin{proof}
	We just need to prove that $\delta_L$ is an 
 $\alpha$-coderivation on  $\Delta_C$. Consider that
 \begin{align*}
 	\Delta_c \circ \delta_L&=(\Delta - \tau \circ \Delta) \circ \delta_L \\
 	&=\Delta \circ \delta_L - \tau \circ \Delta \delta_L \\
 	&=(\alpha \otimes \delta_L) \circ \Delta + (\delta_L \otimes \alpha) \circ \Delta - \tau \circ (\alpha \otimes \delta_L) \circ \Delta- \tau \circ ( \delta_L \otimes \alpha) \Delta \\
 	&=(\alpha \otimes \delta_L) \circ \Delta + (\delta_L \otimes \alpha) \circ \Delta-(\delta_L \otimes \alpha) \tau \circ  \Delta-(\alpha \otimes \delta_L) \tau \circ \Delta \\
 	&=(\alpha \otimes \delta_L)(\Delta-\tau \circ \Delta)+(\delta_L \otimes \alpha)(\Delta-\tau \circ \Delta) \\
 	&=(\alpha \otimes \delta_L) \Delta_c +(\delta_L \otimes \alpha) \Delta_c.
 \end{align*}
This completes the proof.
\end{proof}
The commutator $\Delta_C=\Delta - \tau \circ \Delta$ is not the only way to construct a Hom-Lie CoDer pair, that is why in the next proposition we give another method to do it.
\begin{prop}
	Let $(L,\Delta,\delta_L,\alpha)$ be an \textbf{Hom-AssCoDer pair} and let $T :L \rightarrow L$ be an endomorphism operator satisfying, for $l \in L$
 \begin{eqnarray} \label{eq3.6}
    T^2(l)&=&T(l) , \\
    T \circ \delta_L&=&\delta_L \circ T. \label{eq3.7}
 \end{eqnarray}
Then a \textbf{Hom-Lie CoDer pair} structure is given by 
	\begin{equation*}
		\Delta_c=(1 \otimes T) \Delta-(T \otimes 1) \tau \circ \Delta
	\end{equation*}
\end{prop}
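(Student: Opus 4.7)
The plan is to verify three conditions for $(L,\Delta_c,\delta_L,\alpha)$ to be a Hom-Lie CoDer pair: (i) skew-symmetry $\Delta_c = -\tau \circ \Delta_c$; (ii) the Hom-Jacobi coidentity $(1+\xi+\xi^2)(\alpha \otimes \Delta_c)\Delta_c = 0$; and (iii) the $\alpha$-coderivation compatibility $\Delta_c \circ \delta_L = (\alpha \otimes \delta_L + \delta_L \otimes \alpha)\Delta_c$. Condition (i) is immediate once I write $\Delta_c = (1-\tau)\Delta'$ with $\Delta' = (1 \otimes T)\Delta$, using the naturality identity $\tau \circ (1 \otimes T)\Delta = (T \otimes 1)\tau \circ \Delta$, from which $\tau \circ \Delta_c = (\tau-1)\Delta' = -\Delta_c$.

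For condition (ii), the main obstacle, I would work in Sweedler notation. Starting from $\Delta_c(l) = l_1 \otimes T(l_2) - T(l_2) \otimes l_1$ and applying $(\alpha \otimes \Delta_c)$ to each tensor factor, the endomorphism property $\Delta T = (T \otimes T)\Delta$ gives $\Delta(T(l_2)) = T(l_{21}) \otimes T(l_{22})$, and the idempotence $T^2 = T$ collapses the iterated $T$s so that $\Delta_c(T(l_2)) = T(l_{21}) \otimes T(l_{22}) - T(l_{22}) \otimes T(l_{21})$. Using $T\alpha = \alpha T$ together with the hom-coassociativity identity $\alpha(l_1) \otimes l_{21} \otimes l_{22} = l_{11} \otimes l_{12} \otimes \alpha(l_2)$, I would rewrite the four resulting terms in a common triple of components $a = \alpha(l_1)$, $b = T(l_{21})$, $c = T(l_{22})$, obtaining the signed sum $a \otimes b \otimes c - a \otimes c \otimes b - c \otimes a \otimes b + c \otimes b \otimes a$. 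Applying $\xi$ and $\xi^2$ to this expression and summing then produces pairwise cancellation in each of the six $S_3$-orbits on the three tensor slots, establishing the Hom-Jacobi coidentity.

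Condition (iii) is routine: expand $\Delta_c \circ \delta_L = (1 \otimes T)\Delta\delta_L - (T \otimes 1)\tau\Delta\delta_L$, substitute the coderivation identity $\Delta\delta_L = (\alpha \otimes \delta_L + \delta_L \otimes \alpha)\Delta$, and apply the naturality $\tau(\alpha \otimes \delta_L) = (\delta_L \otimes \alpha)\tau$ together with $T\delta_L = \delta_L T$ and $T\alpha = \alpha T$ to match the result term-by-term with $(\alpha \otimes \delta_L + \delta_L \otimes \alpha)\Delta_c$. The principal difficulty lies in condition (ii): without the idempotence $T^2 = T$, iterated applications of $T$ persist in $(\alpha \otimes \Delta_c)\Delta_c$ and the orbit-by-orbit cancellation under the cyclic sum fails; without the symmetric form of hom-coassociativity, the four terms cannot be transported to the common basis needed for the cancellation to occur.
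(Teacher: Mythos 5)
Your proposal is correct and follows essentially the same route as the paper's proof: skew-symmetry via the naturality $\tau\circ(1\otimes T)\Delta=(T\otimes 1)\tau\circ\Delta$, the Hom-co-Jacobi identity by using $\Delta\circ T=(T\otimes T)\Delta$, $T^2=T$, $T\alpha=\alpha T$ and Hom-coassociativity to reduce all four terms of $(\alpha\otimes\Delta_c)\Delta_c(l)$ to a common triple of components before cancelling under the cyclic sum, and the coderivation property by expanding $\Delta_c\circ\delta_L$ and commuting $T$ past $\alpha$ and $\delta_L$. Your identification of $T^2=T$ and the symmetric form of Hom-coassociativity as the two ingredients that make the cancellation work matches exactly what the paper's computation relies on.
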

\begin{proof}
	The proof is divided into two parts, we start with the first one in which we prove that $(L,\Delta,\alpha)$ is a Hom coalgebra.
	\begin{align*}
		\Delta_c+\tau \circ \Delta_c&=(1 \otimes T) \Delta-(T \otimes 1) \tau \circ \Delta+\tau \circ ((1 \otimes T) \Delta-(T \otimes 1) \tau \circ \Delta) \\
		&=(1 \otimes T) \Delta-(T \otimes 1) \tau \circ \Delta+\tau \circ (1 \otimes T) \Delta-\tau \circ (T \otimes 1) \tau \circ \Delta \\
		&=(1 \otimes T) \Delta-(T \otimes 1) \tau \circ \Delta+ (T \otimes 1)\tau \circ \Delta-(1 \otimes T) \Delta \\
		&=0.
	\end{align*}
Here we used the fact that $\tau \circ (1\otimes T) \Delta=(T \otimes 1) \tau \circ \Delta$. \\
Indeed for $l,l_1,l_2 \in L$,
\begin{align*}
	\tau \circ (1\otimes T) \Delta(l)=\tau \circ (l_1 \otimes T(l_2)) 
	=T(l_2) \otimes l_1 
	=(T \otimes 1)(l_2 \otimes l_1) 
	=(T \otimes 1) \tau \circ \Delta(l).
\end{align*}
Which means that skew symmetry holds. Now for the co-Jacobi identity we have 
\begin{align*}
(\alpha \otimes \Delta_c) \circ \Delta_c(l)&=(\alpha \otimes \Delta_c)((1 \otimes T)\Delta(l)-(T \otimes 1) \tau \circ \Delta(l)) \\
&=(\alpha \otimes \Delta_c)(l_1 \otimes T(l_2)-T(l_2) \otimes l_1) \\
&=\alpha(l_1) \otimes ((1 \otimes T)\Delta(T(l_2))-(T \otimes 1) \tau \circ \Delta(T(l_2))) \\
&-\alpha T(l_2) \otimes ((1 \otimes T)\Delta(l_1)-(T \otimes 1)\tau \circ \Delta(l_1)). 
\end{align*}
By the identity $(T \otimes T)\circ \Delta=\Delta \circ T$, $\alpha \circ T=T \circ \alpha$ and $T^2(l)=T(l)$, we obtain that 
\begin{align*}
	(\alpha \otimes \Delta_c) \circ \Delta_c(l)&=\alpha (l_1) \otimes T(l_{21}) \otimes T(l_{22})-\alpha (l_1) \otimes T(l_{22}) \otimes T(l_{21}) \\
	&-T(\alpha (l_2)) \otimes l_{11} \otimes T(l_{12})+T(\alpha (l_2)) \otimes T(l_{12}) \otimes l_{11}
\end{align*}
and by using the identity of coassociative coalgebra, we obtain 
\begin{align*}
	(\alpha \otimes \Delta_c) \circ \Delta_c(l)&=\alpha (l_1) \otimes T(l_{12}) \otimes T(\alpha (l_2))-\alpha (l_1) \otimes T(\alpha (l_2)) \otimes T(l_{12}) \\
	&-T(\alpha (l_2)) \otimes \alpha (l_1) \otimes T(l_{12})+T(\alpha (l_2)) \otimes T(l_{12}) \otimes \alpha (l_1).
\end{align*}
So 
\begin{align*}
	(1 + \xi+\xi^2)(\alpha \otimes \Delta_c) \circ \Delta_c(l)&=\alpha (l_1) \otimes T(l_{12}) \otimes T(\alpha (l_2))-\alpha (l_1) \otimes T(\alpha (l_2)) \otimes T(l_{12}) \\
	&-T(\alpha (l_2)) \otimes \alpha (l_1) \otimes T(l_{12})+T(\alpha (l_2)) \otimes T(l_{12}) \otimes \alpha (l_1) \\	
	&+  T(l_{12}) \otimes T(\alpha (l_2)) \otimes \alpha (l_1)-T(\alpha (l_2)) \otimes T(l_{12}) \otimes \alpha (l_1) \\
	&-\alpha (l_1) \otimes T(l_{12}) \otimes T(\alpha (l_2))+T(l_{12}) \otimes \alpha (l_1) \otimes T(\alpha (l_2))  \\
	&+ T(\alpha (l_2)) \otimes \alpha(l_1)\otimes T(l_{12}) - T(l_{12}) \otimes \alpha(l_1) \otimes T(\alpha (l_2)) \\
	&- T(l_{12}) \otimes T(\alpha (l_2)) \otimes \alpha (l_1) +\alpha (l_1) \otimes T(\alpha (l_2)) \otimes T(l_{12}) \\
	&=0.
\end{align*}
Similarly, we can prove Hom-co-Jacobi's identity. \\
For the second part of the proof, we need to prove that $\delta_L$ is an $\alpha$-coderivation.
\begin{align*}
	\Delta_c \circ  \delta_L&=((1 \otimes T)\Delta-(T\otimes 1)\tau \circ \Delta) \circ \delta_L \\
	&=(1 \otimes T)\Delta \circ \delta_L-(T\otimes 1)\tau \circ \Delta \circ \delta_L \\
	&=(1 \otimes T)(\alpha \circ \delta_L)\Delta+(1 \otimes T)(\delta_L \otimes \alpha)\Delta-(T\otimes 1)(\delta_L \otimes \alpha)\tau \circ \Delta-(T\otimes 1)(\alpha \otimes \delta_L)\tau \circ \Delta . 
\end{align*}
By Eqs \eqref{eq3.5} and \eqref{eq3.7} and the Proposition  \eqref{prop3.3}, we get
\begin{align*}
	\Delta_c \circ  \delta_L&=(\alpha \circ \delta_L)(1 \otimes T)\Delta+(\delta_L \otimes \alpha)(1 \otimes T)\Delta-(\delta_L \otimes \alpha)(T\otimes 1)\tau \circ \Delta-(\alpha \otimes \delta_L)(T\otimes 1)\tau \circ \Delta \\
	&=(\alpha \circ \delta_L)((1 \otimes T)\Delta-(T\otimes 1)\tau \circ \Delta)+(\delta_L \otimes \alpha)((1 \otimes T)\Delta-(T\otimes 1)\tau \circ \Delta) \\
 &=(\alpha \circ \delta_L) \Delta_c + \delta_L \otimes \alpha \Delta_c.
\end{align*}
This completes the proof.
\end{proof}\noindent {\bf Acknowledgment:}
	The authors would like to thank the referee for valuable comments and suggestions on this article.
	
\end{document}